\def\rev#1{\frac{1}{#1}}
\def\ddb#1{\sqrt{-1}\partial\bar{\partial}#1}
\newcommand{\R}{{\mathbb R}}
\newcommand{\ra}{{\rightarrow}}
\newcommand{\dr}{\omega}
\newcommand{\ka}{K\"{a}hler}
\theoremstyle{plain}
  \newtheorem{theorem}[subsection]{Theorem}
  \newtheorem{proposition}[subsection]{Proposition}
  \newtheorem{lemma}[subsection]{Lemma}
  \newtheorem{corollary}[subsection]{Corollary}
\theoremstyle{remark}
\theoremstyle{definition}
  \newtheorem{definition}[subsection]{Definition}
\numberwithin{equation}{section}
\begin{document}

\title[Approximation of conic K\"{a}hler Metric]{Smooth approximation of conic K\"{a}hler metric with lower Ricci curvature bound}
\author{Liangming Shen}
\address{Department of Mathematics, Princeton University, Princeton, NJ 08544, USA.}
\email{liangmin@math.princeton.edu}

\begin{abstract}
We apply methods in \cite{Ti6} to prove that a conic \ka\ metric with lower Ricci curvature bound
can be approximated by smooth \ka\ metrics with the same lower Ricci curvature bound. Furthermore, conic singularities
 here can be along a simple normal crossing divisor.
\end{abstract}

\maketitle


\section{introduction}\label{section1}

  Recently, a very important progress has been made on Kahler-Einstein metrics on Fano manifolds (See \cite{Ti6}\cite{CDS1}
\cite{CDS2}\cite{CDS3}). The main  tool is an extension of Cheeger-Colding-Tian theory \cite{CCT} to conic Kahler-Einstein metrics.
 This extension allows one to establish partial $C^0$-estimate which is known for long to be crucial in proving the existence of Kahler-Einstein
 metrics. To  extend Cheeger-Colding-Tian theory from smooth case to conic case, in \cite{Ti6}, Tian proved a sharp approximation
theorem: any conic Kahler-Einstein metric can be approximated by smooth Kahler metrics with the same lower Ricci curvature bound
in the Cheeger-Gromov sense. \\

   The main idea for proving this sharp approximation came from \cite{Ti4}, which gives
a method of proving  the equivalence of $C^{0}$-estimate and the properness of the Lagrangian of corresponding complex Monge-Ampere
equation. Let's describe this in more details. First, we can define so called twisted Ding energy $F_{\dr}(\varphi)$ or twisted Mabuchi energy
 $\nu_{\dr}(\varphi)$ as \cite{LS}, which are
Lagrangians of corresponding complex Monge-Ampere equation, for the conic \ka-Einstein metric. Then we can prove these
two energies are both proper with respect to the functional $J_{\dr}(\varphi)$. After that, we perturb this singular
complex Monge-Ampere equation, and prove that corresponding energies are also proper after such perturbation. Then, we make use of
$C^{0}$-estimate in \cite{Ti5} to get a new $C^{0}$-estimate for perturbed complex Monge-Ampere equation. Finally, according to
compactness theorem, we can prove that perturbed \ka\ metrics converge to the original conic \ka-Einstein metric in Cheeger-Gromov
sense, and converge smoothly in $C^{\infty}$ sense outside the divisor. \\

    Now a more general problem is to understand the structures of \ka\ manifolds with lower Ricci curvature bound. A natural question is whether we can
also approximate arbitrary conic \ka\ metric by smooth \ka\ metrics with the same lower Ricci curvature bound. We observe that the
method above in \cite{Ti6} can apply if we can get suitbale complex Monge-Ampere equations and define suitable energiesfor them.
Moreover, instead of multiple anti-canonical divisors in the original proof, we can generalize our result to simple normal crossing
divisors. A divisor D is called a simple normal crossing divisor if it can be written as $$D = \sum_{i=1}^{m} D_{i},$$ where each $D_{i}$
is an irreducible divisor, and they cross only in a transversal way. At each point $p\in D$, it lies in the intersection of k divisors,
 say $D_{1},\cdots, D_{k}$, and in the local coordinate neighborhood U we can write $D_{i}={z_{i}=0}$. Assume that our conic \ka\
metric $\dr$ on the \ka\ manifold M takes an angle $2\pi\beta_{i}$ along each $D_{i}$, where $0<\beta_{i}<1$, then near the point $p\in D$,
the metric $\dr$ is asymptotically equivalent to the model conic metric $$\dr_{0,p}=\sqrt{-1}\left(\sum_{i=1}^{k}\frac{dz_{i}\wedge
d\bar{z}_{i}}{|z_{i}|^{2(1-\beta_{i})}}+\sum_{i=k+1}^{n}dz_{i}\wedge d\bar{z}_{i}\right).$$ We say a smooth \ka\ metric $\dr_{0}$ on M
has a lower Ricci curvature bound $\mu$ if there exists a nonnegative $(1,1)-$form $\Omega_{0}$ such that
\begin{equation}\label{eq:conic}
Ric(\dr_{0})=\mu\dr_{0}+\Omega_{0}.
\end{equation}
And we say our conic \ka\ metric $\dr$ has a lower Ricci curvature bound $\mu$ if there exists a nonnegative $(1,1)-$form $\Omega$
such that (we may assume that $\Omega\neq 0$ otherwise we come back to conic \ka\-Einstein case.)
\begin{equation}\label{eq:bk}
Ric(\dr)=\mu\dr+\sum_{i=1}^{k}2\pi(1-\beta_{i})[D_{i}]+\Omega.
\end{equation}
This equation is in the sense of currents on M and in classic sense outside singular part D. Consider these equations and apply Tian
\cite{Ti6}'s methods for conic \ka-Einstein metrics, we can prove our main theorem as below:

\begin{theorem}\label{thmmain}
For \ka\ manifold $(M,D)$ where D is a simplenormal crossing divisor, assume that we have a smooth \ka\ metric $\dr_{0}$ and a conic
\ka\ metrics $\dr=\dr_{0}+\ddb\varphi$ with cone angle $2\pi\beta_{i}\ (0<\beta_{i}<1, 1\leq i\leq m)$ along each irreducible component
 $D_{i}$ of D and $\varphi$ is a smooth real function on $M\setminus D$.  If both of them have the same lower Ricci curvature bound $\mu$,
 then for any $\delta>0$, there exsits a smooth \ka\ metric $\dr_{\delta}$ with the same lower Ricci curvature bound $\mu$ which converges
 to $\dr$ in the Gromov-Hausdorff topology on M and in the smooth topology outside D as $\delta$ tends to 0.
\end{theorem}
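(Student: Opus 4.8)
\emph{Proof strategy.} The plan is to run the program of \cite{Ti6} in this setting. First I would convert \eqref{eq:bk} into a scalar equation: choose holomorphic sections $s_{i}$ of $\mathcal{O}([D_{i}])$ vanishing exactly on $D_{i}$, and smooth Hermitian metrics $h_{i}$ with curvatures $\Theta_{h_{i}}$, so that $2\pi[D_{i}]=\Theta_{h_{i}}+\ddb{\log|s_{i}|_{h_{i}}^{2}}$. Comparing \eqref{eq:conic} and \eqref{eq:bk} in cohomology shows $\Omega_{0}-\Omega-\sum_{i}(1-\beta_{i})\Theta_{h_{i}}$ is $\partial\bar{\partial}$-exact, say $=\ddb\psi$ with $\psi\in C^{\infty}(M)$; feeding this back in and integrating shows $\dr=\dr_{0}+\ddb\varphi$ solves
\begin{equation*}
(\dr_{0}+\ddb\varphi)^{n}=e^{-\mu\varphi}\,e^{\psi}\,\Bigl(\textstyle\prod_{i}|s_{i}|_{h_{i}}^{2(1-\beta_{i})}\Bigr)^{-1}\dr_{0}^{n},
\end{equation*}
while conversely any K\"ahler solution has $Ric=\mu\dr+\Omega+\sum_{i}2\pi(1-\beta_{i})[D_{i}]$; since $2(1-\beta_{i})<2$ the weight is in $L^{1}$, consistent with $\varphi$ being merely bounded near $D$. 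Writing $F_{0}:=\psi-\sum_{i}(1-\beta_{i})\log|s_{i}|_{h_{i}}^{2}$, the content of \eqref{eq:bk} is precisely that $\Omega_{0}-\ddb F_{0}=\Omega+\sum_{i}2\pi(1-\beta_{i})[D_{i}]\ge 0$, i.e. $-F_{0}$ is $\Omega_{0}$-plurisubharmonic.

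Next comes the regularisation. The aim is to produce, by a Demailly--B\l ocki--Ko\l odziej type regularisation of the $\Omega_{0}$-plurisubharmonic function $-F_{0}$ adapted to the simple normal crossing geometry of $D$ (its singularities being the mild logarithmic poles of coefficient $1-\beta_{i}<1$), smooth functions $F_{\delta}$ increasing to $F_{0}$ as $\delta\to 0$ with $\Omega_{0}-\ddb F_{\delta}\ge 0$. Granting this, set, for $\delta>0$,
\begin{equation*}
\dr_{\delta}:=\dr_{0}+\ddb\varphi_{\delta},\qquad (\dr_{0}+\ddb\varphi_{\delta})^{n}=c_{\delta}\,e^{-\mu\varphi_{\delta}+F_{\delta}}\,\dr_{0}^{n}.
\end{equation*}
The Ricci identity gives $Ric(\dr_{\delta})=\mu\dr_{\delta}+(\Omega_{0}-\ddb F_{\delta})\ge\mu\dr_{\delta}$, so every $\dr_{\delta}$ automatically has lower Ricci bound $\mu$, and the smooth weights $c_{\delta}e^{F_{\delta}}$ converge monotonically to $e^{\psi}\prod_{i}|s_{i}|_{h_{i}}^{-2(1-\beta_{i})}$ in $L^{1}$ and in $C^{\infty}_{loc}(M\setminus D)$.

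It remains to solve the $\delta$-equations with uniform estimates and let $\delta\to 0$. When $\mu\le 0$ solvability and uniform $C^{0}$-bounds are classical (Aubin--Yau, the maximum principle, Ko\l odziej's estimate). When $\mu>0$ this is the delicate case, and here I would follow \cite{Ti6},\cite{LS}: define the twisted Ding energy $F_{\dr}$ and twisted Mabuchi energy $\nu_{\dr}$ of the conic equation, prove they are proper with respect to $J_{\dr}$ (using that $\dr$ itself solves the equation), and then prove the corresponding energies for the perturbed equations are proper \emph{uniformly in $\delta$}; this yields solvability together with a uniform bound on $J_{\dr_{0}}(\varphi_{\delta})$, whence $\|\varphi_{\delta}\|_{L^{\infty}}\le C$ by the $C^{0}$-estimate of \cite{Ti5}. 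Away from $D$ the weight is smooth and two-sided bounded on compact sets, so the usual second-order and Schauder estimates give uniform $C^{\infty}_{loc}(M\setminus D)$ bounds, hence $\dr_{\delta}\to\dr$ there, the limit being $\dr$ by uniqueness of the conic solution. Finally, the uniform lower Ricci bound, the fixed volume $\int_{M}\dr_{\delta}^{n}=[\dr_{0}]^{n}$, and a uniform diameter/non-collapsing bound (Bonnet--Myers for $\mu>0$, the Sobolev bound from the $C^{0}$-estimate otherwise) give Gromov precompactness; together with the $C^{\infty}_{loc}$ convergence off $D$ and the negligible capacity of $D$ (so the limit is the metric completion of $(M\setminus D,\dr)$, which is homeomorphic to $M$ since $\beta_{i}<1$), the Gromov--Hausdorff limit is $(M,\dr)$.

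The step I expect to be the main obstacle is exactly the one granted above: smoothing the conic weight without destroying the \emph{exact} lower Ricci bound. The naive smoothing $\log(|s_{i}|_{h_{i}}^{2}+\delta^{2})$ produces error terms $\Theta_{h_{i}}+\ddb\log(|s_{i}|_{h_{i}}^{2}+\delta^{2})$ lying in $2\pi c_{1}([D_{i}])$ which need not be nonnegative when $[D_{i}]$ is not nef, so one cannot argue divisor by divisor; one must instead regularise the global $\Omega_{0}$-plurisubharmonic function $-F_{0}$ at once, using crucially the semipositivity of $\Omega_{0}=Ric(\dr_{0})-\mu\dr_{0}$ and the mildness of the poles along the simple normal crossing divisor. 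A second, independent difficulty — already present in \cite{Ti6} — is, when $\mu>0$, to upgrade the mere lower boundedness of the twisted energy (which the existence of $\dr$ supplies for free) to properness, and to propagate this properness uniformly through the perturbation $\delta\to 0$.
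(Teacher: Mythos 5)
Your overall strategy — scalarize \eqref{eq:bk}, regularize, solve with uniform estimates via properness of the twisted Ding/Mabuchi energies, pass to the Gromov--Hausdorff limit — is exactly the paper's (Sections 2--6). The genuine departure is the regularization step, where you reject the ``naive'' smoothing and propose instead to regularize the $\Omega_0$-psh function $-F_0$. The paper in fact uses precisely the naive smoothing $h_\delta = h_0 - \sum_i(1-\beta_i)\log(\delta + \|S_i\|_i^2) + c_\delta$, computes
$$Ric(\dr_\delta)=\mu\dr_\delta+\Omega+\sum_i(1-\beta_i)\Bigl(\frac{\delta}{\delta+\|S_i\|_i^2}R(\|\cdot\|_i)+\frac{\delta\, DS_i\wedge\overline{DS_i}}{(\delta+\|S_i\|_i^2)^{2}}\Bigr),$$
and asserts the remainder is nonnegative. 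Your objection to this is legitimate: near $D_i$ the factor $\delta/(\delta+\|S_i\|_i^2)$ tends to $1$, so that term is not $O(\delta)$, and the rank-one positive piece $DS_i\wedge\overline{DS_i}$ cannot dominate a full-rank negative part of $R(\|\cdot\|_i)$. In \cite{Ti6} the divisor is anticanonical, so the Hermitian curvature can be taken positive; in the generality claimed here one needs either to choose the metrics $\|\cdot\|_i$ so that $\sum_i(1-\beta_i)R(\|\cdot\|_i)\leq\Omega$ (a positivity condition the paper does not discuss) or a different argument.

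However, the alternative you ``grant'' is not available off the shelf, so this remains a real gap in your proposal. Demailly regularization of quasi-psh functions generically loses an $\epsilon\omega$ of positivity, and the loss-free B\l ocki--Ko\l odziej theorem requires a strictly positive (K\"ahler) background form, whereas $\Omega_0=Ric(\dr_0)-\mu\dr_0$ is only semi-positive. Since the theorem demands the approximants have Ricci lower bound \emph{exactly} $\mu$, an $\epsilon$-loss would only prove a weaker statement. To close your route you would need a dedicated smoothing lemma for $\Omega_0$-psh functions with mild log-poles of coefficient $1-\beta_i<1$ along an SNC divisor, exploiting that $\Omega_0$ is smooth and globally semi-positive; absent that, the obstacle is the same one you flag in the naive scheme, merely relocated. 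The remainder of your outline — properness of the twisted energies from the log $\alpha$-invariant and the existence of $\dr$, uniform $C^0$-bound via Moser iteration, the generalized Schwarz lemma for the $C^2$-estimate, and Colding's volume convergence together with geodesic convexity of $M\setminus D$ for the Gromov--Hausdorff limit — tracks the paper's Sections 3--6 faithfully.
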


    Note that here we can deal with all the cases for $\mu$. However, by Aubin and Yau, the cases $\mu<0$ and $\mu=0$ are easy to handle.
The difficulty will be when $\mu>0$, i.e. Fano case. In the following section, we set up the complex Monge-Ampere equations and perturb it,
and derive $C^{0}$-estimate for nonpositive $\mu$. And we will deal with the case $\mu>0$ in the remaining parts of this paper.  \\

\noindent{\bf Acknowledgment.} First the author wants to thank his Ph.D thesis advisor Professor Gang Tian for a lot of discussions and
encouragement. And he also wants to thank Dr. Chi Li for many useful conversations. And he also thanks CSC for partial financial
support during his Ph.D career.

\section{basic set up and the case $\mu\leq 0$}

    First, comparing equations \eqref{eq:conic}\eqref{eq:bk}, we have $$\ddb\log\frac{\dr^{n}}{\dr_{0}^{n}}=-\mu\varphi+\Omega_{0}-\Omega-
\sum_{i=1}^{m}(1-\beta_{i})(R(||\cdot||_{i})+\ddb\log||S_{i}||_{i}^{2}),$$ where $\dr=\dr_{0}+\varphi$ is the conic \ka\ metric. As each
$D_{i}$ is an irreducible positive divisor, we set $S_{i}$ as its defining holomorphic section, with $(||\cdot||_{i})$ as the Hermitian
product on the associated line bundle $[D_{i}]$, and the curvature of this bundle is defined as $R(||\cdot||_{i}):=-\ddb\log||\cdot||_{i}
^{2}.$ Then we get the equation above just from Poincar\'e-Lelong equation $$2\pi[D]=\ddb\log|S|^{2}=\ddb\log||S||^{2}+R(||\cdot||).$$
Note that the left handside of \eqref{eq:conic}\eqref{eq:bk} lie both in the cohomology class $c_{1}(M),$ we deduce that
\begin{equation}\label{eq:coho}
\Omega_{0}-\Omega-\sum_{i=1}^{m}(1-\beta_{i})(R(||\cdot||_{i})=\ddb h_{0}.
\end{equation}
where $h_{0}$ is a smooth function on M and we note that $\frac{1}{2\pi}R(||\cdot||_{i})$ represents $c_{1}(D_{i}).$ Then we get our complex
Monge-Ampere equation:
\begin{equation}\label{eq:cma0}
(\dr_{0}+\ddb\varphi)^{n}=e^{h_{0}-\mu\varphi-\sum_{i=1}^{m}(1-\beta_{i})\log||S_{i}||_{i}^{2}+c}\dr_{0}^{n},
\end{equation}
where the constant c is chosen so that $$\int_{M}(e^{h_{0}-\sum_{i=1}^{m}(1-\beta_{i})\log||S_{i}||_{i}^{2}+c}-1)\dr_{0}^{n}=0.$$
\\

As \cite{Ti6}, we can choose such an approximation equation:
\begin{equation}\label{eq:cma-app}
(\dr_{0}+\ddb\varphi)^{n}=e^{h_{\delta}-\mu\varphi}\dr_{0}^{n},
\end{equation}
where $$h_{\delta}=h_{0}-\sum_{i=1}^{m}(1-\beta_{i})\log(\delta+||S_{i}||_{i}^{2})+c_{\delta}$$ and the constant $c_{\delta}$
is chosen such that $$\int_{M}(e^{h_{0}-\sum_{i=1}^{m}(1-\beta_{i})\log(\delta+||S_{i}||_{i}^{2})+c_{\delta}}-1)\dr_{0}^{n}=0.$$
Here $c_{\delta}$ is uniformly bounded. If we have a solution $\varphi_{\delta}$ for \eqref{eq:cma-app}, then we get a smooth \ka\
metric $\dr_{\delta}=\dr_{0}+\ddb\varphi_{\delta}$ with Ricci curvature as below:
\begin{align*}
 Ric(\dr_{\delta})=&Ric(\dr_{0})+\mu\ddb\varphi_{\delta}-\ddb h_{\delta}\\=&\mu\dr_{0}+\Omega_{0}+\mu\ddb\varphi_{\delta}-
\ddb h_{0}+\sum_{i=1}^{m}(1-\beta_{i})\ddb\log(\delta+||S_{i}||_{i}^{2})\\=&\mu\dr_{\delta}+\Omega+\sum_{i=1}^{m}(1-\beta_{i})
(R(||\cdot||_{i})+\frac{||S_{i}||_{i}^{2}}{\delta+||S_{i}||_{i}^{2}}\ddb\log ||S_{i}||_{i}^{2}\\&+\frac{\delta DS_{i}\wedge
\overline{DS_{i}}}{(\delta+||S_{i}||_{i}^{2})^{2}})\\=&\mu\dr_{\delta}+\Omega+\sum_{i=1}^{m}(1-\beta_{i})(\frac{\delta}{\delta+
||S_{i}||_{i}^{2}}R(||\cdot||_{i})+\frac{\delta DS_{i}\wedge\overline{DS_{i}}}{(\delta+||S_{i}||_{i}^{2})^{2}}),
\end{align*}
note that $||S_{i}||_{i}^{2}\ddb\log|S_{i}|_{i}^{2}=||S_{i}||_{i}^{2}\cdot 2\pi[D_{i}]=0.$ We can see that if we have a
solution $\varphi_{\delta}$ for small $\delta>0$, the Ricci curvature of $\dr_{\delta}$ is always greater than $\mu.$\\

By the computation above, we have a corollary which asserts the openness of the solvable set for the continuity path below:
\begin{lemma}\label{lem:open}
 Consider the continuity path of the equation \eqref{eq:cma-app}
\begin{equation}\label{eq:cma-app-continuity}
 (\dr_{0}+\ddb\varphi)^{n}=e^{h_{\delta}-t\varphi}\dr_{0}^{n},
\end{equation}
and set the interval $I_{\delta}$ as its solvable interval, then $0\in I_{\delta}$ and this interval is open.
\end{lemma}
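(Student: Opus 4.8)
The plan is to establish Lemma \ref{lem:open} by the standard implicit function theorem argument adapted to the continuity path \eqref{eq:cma-app-continuity}. First I would verify that $0 \in I_\delta$. When $t = 0$ the equation reads $(\dr_0 + \ddb\varphi)^n = e^{h_\delta}\dr_0^n$; since $h_\delta$ is a smooth function on all of $M$ (the $\log(\delta + \|S_i\|_i^2)$ terms are smooth for $\delta > 0$) and since the normalizing constant $c_\delta$ is chosen precisely so that $\int_M (e^{h_\delta} - 1)\dr_0^n = 0$, i.e. $\int_M e^{h_\delta}\dr_0^n = \int_M \dr_0^n$, the Calabi--Yau theorem of Yau provides a smooth solution $\varphi$, unique up to an additive constant. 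Hence $0 \in I_\delta$.

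Next I would prove openness. Fix $t_0 \in I_\delta$ with a smooth solution $\varphi_{t_0}$, and consider the map $\mathcal{N}(\varphi, t) = \log\frac{(\dr_0 + \ddb\varphi)^n}{\dr_0^n} - h_\delta + t\varphi$ between suitable Hölder spaces, say $\mathcal{N} : C^{2,\alpha} \times \R \to C^{0,\alpha}$ (restricting, when $t_0 = 0$, to functions with $\int_M \varphi\, \dr_{\varphi_{t_0}}^n = 0$ to kill the kernel). The linearization of $\mathcal{N}$ in the $\varphi$ direction at $(\varphi_{t_0}, t_0)$ is
\begin{equation*}
L(\psi) = \Delta_{\dr_{t_0}}\psi + t_0 \psi,
\end{equation*}
where $\dr_{t_0} = \dr_0 + \ddb\varphi_{t_0}$ and $\Delta_{\dr_{t_0}}$ is its (negative) Laplacian. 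For $t_0 \leq 0$ this operator is invertible on the relevant spaces: when $t_0 < 0$ the maximum principle gives injectivity directly, and when $t_0 = 0$ invertibility holds on the space of functions of mean zero. For $t_0 > 0$ one needs that $-t_0$ is not an eigenvalue of $\Delta_{\dr_{t_0}}$; but the spectrum of $\Delta_{\dr_{t_0}}$ is nonpositive, so $L$ has trivial kernel and, being self-adjoint elliptic of index zero, is an isomorphism. In every case the implicit function theorem then yields solutions $\varphi_t$ for $t$ near $t_0$, so $I_\delta$ is open.

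The one genuine subtlety — and the place I would be most careful — is the normalization issue at $t_0 = 0$, where $L = \Delta_{\dr_{t_0}}$ has constant functions in its kernel and the $t\varphi$ term is not yet present to break the degeneracy. There one argues as usual: the equation at $t = 0$ determines $\dr_\varphi$ but not the constant, and moving $t$ slightly off $0$ the added $t\varphi$ term together with the integral constraint $\int_M(e^{h_\delta - t\varphi} - e^{h_\delta})\dr_0^n = 0$ pins down the constant, so the implicit function theorem applies to the restricted map with the mean-zero normalization. For $t_0 > 0$ the proof is the cleanest, since no normalization is needed and invertibility of $\Delta_{\dr_{t_0}} + t_0$ is immediate from nonpositivity of the spectrum; this is exactly the favorable feature that the computation preceding the lemma (showing $Ric(\dr_\delta) > \mu$, hence genuine positivity of the twisted class) is meant to exploit. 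Elliptic regularity bootstraps the $C^{2,\alpha}$ solution to a smooth one, completing the proof.
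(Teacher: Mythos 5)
Your overall structure matches the paper's: $0\in I_\delta$ from Calabi--Yau, and openness via the implicit function theorem with linearization $L = \Delta_{\dr_{t_0}} + t_0$. The $t_0=0$ normalization discussion is fine. But there is a genuine error in the case $t_0>0$, which is precisely the case the lemma needs.

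You claim that "invertibility of $\Delta_{\dr_{t_0}} + t_0$ is immediate from nonpositivity of the spectrum." This is not correct. The spectrum of the (negative) Laplacian $\Delta_{\dr_{t_0}}$ is $\{0, -\lambda_1, -\lambda_2, \dots\}$ with $\lambda_j>0$, so the kernel of $L$ consists exactly of the eigenfunctions with $-\Delta_{\dr_{t_0}}$-eigenvalue equal to $t_0$. When $t_0>0$, nonpositivity of the spectrum of $\Delta$ in no way prevents $t_0$ from lying in the spectrum of $-\Delta$; you need the strict bound $\lambda_1(-\Delta_{\dr_{t_0}}) > t_0$. This is exactly what the paper's proof invokes, and it is not free: it comes from the Bochner--Kodaira formula (a Lichnerowicz-type estimate) applied to the strict Ricci lower bound $Ric(\dr_{\delta,t_0}) > t_0\,\dr_{\delta,t_0}$ established by the computation preceding the lemma. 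You allude to that computation in your final sentence, but your wording makes the Ricci estimate look like a convenience rather than what it actually is --- the essential input, without which $L$ could well have nontrivial kernel. To repair the argument, replace the "nonpositivity of the spectrum" claim with: since $Ric(\dr_{\delta,t_0}) > t_0\,\dr_{\delta,t_0}$ strictly (because $\Omega$ and the extra curvature terms are nonnegative and $\Omega\neq0$), the Bochner formula gives $\lambda_1(-\Delta_{\dr_{t_0}})>t_0$, hence $L=\Delta_{\dr_{t_0}}+t_0$ is invertible.
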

\begin{proof}
 $0\in I_{\delta}$ follows from Calabi-Yau theorem. By the computation above and \cite{Ti6}, it's easy to have $\lambda_{1}(-\Delta
_{t})$ is strictly larger than t. Then the openness of $I_{\delta}$ follows.
\end{proof}
   So now, to solve the equation \eqref{eq:cma-app}, we need to set up $C^{0}$-estimate for $\varphi_{\delta}.$ We first consider
the cases that $\mu=0$ and $\mu<0.$ Actually by Calabi-Yau Theorem and Aubin's work (see \cite{Yau}), we can get $C^{0}$-estimates for
these cases. The main difficulty lies in the case $\mu>0$, which we will deal with in the following sections.\\

\section{twisted functionals for complex monge-ampere equations, bounded from below}

Following \cite{DT}\cite{Ti4}\cite{LS}, we can still define corresponding functionals for our complex Monge-Ampere equation
\eqref{eq:cma0}. First, we define generalized energy functionals as below:

\begin{definition}
 $$ (1) J_{\dr_{0}}(\varphi)=\rev V\sum_{i=0}^{n-1}\frac{i+1}{n+1}\int_{M}\sqrt{-1}\partial\varphi\wedge\bar{\partial}
\varphi\wedge\dr_{0}^{i}\wedge\dr_{\varphi}^{n-i-1},$$\\
 $$ (2) I_{\dr_{0}}(\varphi)=\rev V\int_{M}\varphi(\dr_{0}^{n}-\dr_{\varphi}^{n}),$$
where $V=\int_{M}\dr_{0}^{n}$, $\dr_{\varphi}=\dr_{0}+\ddb\varphi.$
\end{definition}
Note that these functionals are well defined even in conic case. It's easy to check that
$$0\leq\frac{n+1}{n}J_{\dr_{0}}(\varphi)\leq I_{\dr_{0}}(\varphi)\leq (n+1)J_{\dr_{0}}(\varphi).$$ Next let's define two functionals which
are both Lagrangians of the equation \eqref{eq:cma0}. For simplicity here we set
$$H_{0}=h_{0}-\sum_{i=1}^{m}(1-\beta_{i})\log||S_{i}||_{i}^{2}+c,$$ and we can choose a family $\varphi_{t}$ connected 0 and $\varphi$
\begin{definition}\label{def:ding-ma}
 (1) We define twisted Ding functional as
\begin{equation}\label{eq:ding}
 F_{\dr_{0},\mu}(\varphi)=J_{\dr_{0}}(\varphi)-\rev V\int_{M}\varphi\dr_{0}^{n}-\frac{1}{\mu}\log\left(\rev V\int_{M}
e^{H_{0}-\mu\varphi}\dr_{0}^{n}\right),
\end{equation}
 (2) we define twisted Mabuchi functional as
\begin{align*}
 \nu_{\dr_{0},\mu}(\varphi)=&-\frac{n}{V}\int_{0}^{1}\int_{M}\dot{\varphi}(Ric(\dr_{\varphi})-\mu\dr_{\varphi}-\sum_{i=1}^{m}
2\pi(1-\beta_{i})[D_{i}]-\Omega\wedge\dr_{\varphi}^{n-1})dt\\=&\rev V\int_{M}\log\frac{\dr_{\varphi}^{n}}{\dr_{0}^{n}}\dr_{\varphi}^{n}
+\rev V\int_{M}H_{0}(\dr_{0}^{n}-\dr_{\varphi}^{n})-\mu(I_{\dr_{0}}(\varphi)-J_{\dr_{0}}(\varphi))\\=&
\rev V\int_{M}\log\frac{\dr_{\varphi}^{n}}{\dr_{0}^{n}}\dr_{\varphi}^{n}+\rev V\int_{M}H_{0}(\dr_{0}^{n}-\dr_{\varphi}^{n})+
\mu(F_{\dr_{0}}^{0}(\varphi)+\rev V\int_{M}\varphi\dr_{\varphi}^{n}),
\end{align*}
where $$F_{\dr_{0}}^{0}(\varphi)=J_{\dr_{0}}(\varphi)-\rev V\int_{M}\varphi\dr_{0}^{n}.$$
\end{definition}

These definitions are similar to smooth case \cite{Ti5} and conic \ka-Einstein case \cite{LS}. We can check that they are well defined
for conic case. From \cite{Ti5}\cite{LS}, we know that to get $C^{0}$-estimate for $\varphi_{\delta}$, we need to prove the corresponding
twisted Ding functional is proper with respect to the generalized energy $J_{\dr_{0}}(\varphi)$. Now let's recall the definition of
properness:
\begin{definition}
 Suppose the twisted Ding functional $F_{\dr,\mu}(\phi)$(twisted Mabuchi functional $\nu_{\dr,\mu}(\phi)$) is bounded
from below, i.e. $F_{\dr,\mu}(\phi)\geq -c_{\dr}$ ($\nu_{\dr,\mu}(\varphi)\geq -c_{\dr}$), we say it is proper on
$P_{c}(M, \dr)$, if there exists an increasing function $f: [-c_{\dr},\infty)\ra\R, $ and $\lim_{t\ra\infty}f(t)=\infty$, such that for any
$\phi\in P_{c}(M,\dr),$ we have $$F_{\dr,\mu}(\phi)\geq f(J_{\dr}(\phi))\quad(\nu_{\dr,\mu}(\phi)\geq f(J_{\dr}(\phi))),$$ where
$\phi\in P_{c}(M,\dr)$ is a smooth function on $M\setminus D$ such that $\dr_{\phi}=\dr+\ddb\phi$ is a conic metric with the prescribed
angles along each component of D.
\end{definition}

There are a lot of properties for these functionals, which are parallel to \cite{Ti5}\cite{Li}\cite{LS}. First We just put two basic facts
here and the proofs are in \cite{Ti5}\cite{LS}:
\begin{proposition}
(1)Given a path $\{\phi_{t}\}$ in $P_{c}(M,\dr)$, we have
\begin{align*}
 \frac{d}{dt}J_{\dr}(\phi_{t})&=-\rev V\int_{M}\dot{\phi_{t}}(\dr_{\phi}^{n}-\dr^{n}),\\
 \frac{d}{dt}F_{\dr}^{0}(\phi_{t})&=-\rev V\int_{M}\dot{\phi_{t}}\dr_{\phi}^{n},
\end{align*}
(2)$F_{\dr,\mu}(\phi)$, $F_{\dr}^{0}(\phi)$ and $\nu_{\dr,\mu}(\phi)$ satisfy the cocycle condition:
\begin{align*}
 F_{\dr,\mu}(\phi)+F_{\dr_{\phi},\mu}(\psi-\phi)&=F_{\dr,\mu}(\psi),\\
 F_{\dr}^{0}(\phi)+F_{\dr_{\phi}}^{0}(\psi-\phi)&=F_{\dr}^{0}(\psi),\\
 \nu_{\dr,\mu}(\phi)+\nu_{\dr_{\phi},\mu}(\psi-\phi)&=\nu_{\dr,\mu}(\psi),
\end{align*}
\end{proposition}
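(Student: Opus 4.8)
The plan is to reduce everything to the smooth-case computations of \cite{Ti5} and then to check that each step survives the conic singularities along $D$.

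First I would prove the derivative formula for $J_{\dr}$ by differentiating the defining sum term by term along a path $\{\phi_t\}\subset P_{c}(M,\dr)$. Differentiating a typical summand $\int_{M}\sqrt{-1}\partial\phi_t\wedge\bar\partial\phi_t\wedge\dr^{i}\wedge\dr_{\phi_t}^{n-i-1}$ in $t$ produces the terms containing $\partial\dot{\phi_t}\wedge\bar\partial\phi_t$ and $\partial\phi_t\wedge\bar\partial\dot{\phi_t}$, together with $(n-i-1)\sqrt{-1}\partial\phi_t\wedge\bar\partial\phi_t\wedge\dr^{i}\wedge\ddb\dot{\phi_t}\wedge\dr_{\phi_t}^{n-i-2}$; integrating by parts to peel $\partial,\bar\partial$ off $\dot{\phi_t}$, and using $\partial\dr=\bar\partial\dr=\partial\dr_{\phi_t}=\bar\partial\dr_{\phi_t}=0$ together with $\ddb\phi_t=\dr_{\phi_t}-\dr$, the sum telescopes to $-\rev V\int_{M}\dot{\phi_t}(\dr_{\phi_t}^{n}-\dr^{n})$. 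The second formula is then immediate from $F_{\dr}^{0}(\phi)=J_{\dr}(\phi)-\rev V\int_{M}\phi\,\dr^{n}$, since $\frac{d}{dt}\rev V\int_{M}\phi_t\,\dr^{n}=\rev V\int_{M}\dot{\phi_t}\,\dr^{n}$.

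The genuinely new point compared with \cite{Ti5} is justifying these integrations by parts when $\dr$ and $\dr_{\phi_t}$ are only conic along the simple normal crossing divisor $D$. For this I would use that every $\phi\in P_{c}(M,\dr)$ is bounded on $M$ and smooth on $M\setminus D$, that the mixed wedge products $\dr^{i}\wedge\dr_{\phi_t}^{n-i}$ are well-defined positive measures, and that all the integrals above converge because near each point of $D$ every integrand is, by the asymptotic equivalence with the model metric $\dr_{0,p}$, dominated by an integrable density. One then excises an $\varepsilon$-tubular neighbourhood $U_{\varepsilon}$ of $D$, integrates by parts on $M\setminus U_{\varepsilon}$ where everything is smooth, and checks that the boundary integrals over $\partial U_{\varepsilon}$ vanish as $\varepsilon\to0$; here one uses that $D$ has real codimension two and that $\phi_t,\dot{\phi_t}$ are bounded. (Equivalently, one may regularize $\phi_t$ away from $D$ and pass to the limit.) I expect this bookkeeping near $D$ to be the main obstacle.

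For part (2) I would use the derivative formulas just obtained, together with the convexity of $P_{c}(M,\dr)$ so that the required paths exist. Symmetry of the mixed second variation of $F_{\dr}^{0}$ shows that $F_{\dr}^{0}(\phi)=-\rev V\int_{0}^{1}\int_{M}\dot{\phi_t}\,\dr_{\phi_t}^{n}\,dt$ is independent of the chosen path from $0$ to $\phi$; likewise $\nu_{\dr,\mu}$ is path independent, being by Definition~\ref{def:ding-ma} the integral of the (twisted) Mabuchi one-form, which is closed. Concatenating a path from $0$ to $\phi$ with one from $\phi$ to $\psi$ and rewriting the running potential on the second leg relative to $\dr_{\phi}$ (legitimate since $\dr_{\phi}+\ddb(\psi-\phi)=\dr_{\psi}$ and the integrands depend only on the running \ka\ form and its $t$-derivative) yields the cocycle identities for $F_{\dr}^{0}$ and for $\nu_{\dr,\mu}$. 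Finally $F_{\dr,\mu}$ differs from $F_{\dr}^{0}$ only by the term $-\frac1\mu\log\!\big(\rev V\int_{M}e^{H_{0}-\mu\varphi}\dr^{n}\big)$, so once the $F_{\dr}^{0}$ cocycle is known the assertion reduces to an identity among logarithmic terms; using how the reference function $H_{0}$ transforms when the base metric is moved from $\dr$ to $\dr_{\phi}$, these log terms telescope and the cocycle for $F_{\dr,\mu}$ follows. All of this is parallel to \cite{Ti5}\cite{LS}, the only real work being the convergence and integration-by-parts estimates near $D$ indicated above.
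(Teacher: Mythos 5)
Your proposal is correct and follows essentially the same route as the paper, which simply refers to \cite{Ti5}\cite{LS} for the computations, observes that the cocycle identities for $F_{\dr}^{0}$ and $\nu_{\dr,\mu}$ ``follow directly from differentiation,'' and notes that the cocycle for $F_{\dr,\mu}$ requires choosing the reference function $h_{\phi}$ (hence $H_{\phi}$) for the new base metric $\dr_{\phi}$ in a manner parallel to \eqref{eq:coho}. Your additional bookkeeping on integration by parts near $D$ and the explicit telescoping of the logarithmic term are just the details the paper leaves implicit.
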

  We note that in (2), the last two follow directly from differentiation. For $F_{\dr_{\phi},\mu}$, we need to choose corresponding
function $h_{\phi}$ parallel to $h_{0}$ in the equation \eqref{eq:coho}. Whatever $\dr_{\phi}$ is smooth or conic along D, we can write
$Ric(\dr_{\phi})=\mu\dr_{\varphi}+\Omega_{\phi}$ or $Ric(\dr_{\phi})=\mu\dr_{\phi}+\sum_{i=1}^{k}2\pi(1-\beta_{i})[D_{i}]+\Omega_{\phi},$
where $\Omega_{\phi}$ is not necessarily nonnegative. Then all the arguments in smooth case will apply.\\
   From (1) we have a useful corollary:
\begin{corollary}\label{cor:scale}
 For $0<t<1,$ we have $$J_{\dr}(t\phi)\leq t^{\frac{n+1}{n}}J_{\dr}(\phi).$$
\end{corollary}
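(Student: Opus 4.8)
The plan is to reduce the stated inequality to a first-order differential inequality for the single-variable function $g(t):=J_{\dr}(t\phi)$ on $[0,1]$. Using the homogeneity $\sqrt{-1}\partial(t\phi)\wedge\bar\partial(t\phi)=t^{2}\sqrt{-1}\partial\phi\wedge\bar\partial\phi$ together with $\dr_{t\phi}=\dr+\ddb{t\phi}$, the defining formula for $J_{\dr}$ gives
$$g(t)=\frac{t^{2}}{V}\sum_{i=0}^{n-1}\frac{i+1}{n+1}\,B_{i}(t),\qquad B_{i}(t):=\int_{M}\sqrt{-1}\partial\phi\wedge\bar\partial\phi\wedge\dr^{i}\wedge\dr_{t\phi}^{\,n-i-1}.$$
The first thing I would record is that $B_{i}(t)\ge 0$ for every $i$ and every $t\in[0,1]$: indeed $\dr_{t\phi}=(1-t)\dr+t\dr_{\phi}$ is a convex combination of conic \ka\ metrics with the prescribed cone angles, hence a positive $(1,1)$-current, and $\sqrt{-1}\partial\phi\wedge\bar\partial\phi$ is a nonnegative $(1,1)$-form, so the integrand is a nonnegative volume form. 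I would also note that $g$ is in fact a polynomial in $t$ (expand $\dr_{t\phi}^{\,n-i-1}$ binomially), so there is no difficulty differentiating it; the only place where the conic nature of $\dr$ enters is the convergence of the integrals $B_{i}(t)$, which is precisely the well-definedness already granted for these functionals.

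Next I would compute $g'(t)$ by applying part (1) of the preceding Proposition to the path $\{t\phi\}_{t\in[0,1]}$, which lies in $P_{c}(M,\dr)$ by the convexity just used. This yields $g'(t)=-\frac{1}{V}\int_{M}\phi\,(\dr_{t\phi}^{\,n}-\dr^{n})$. I would then expand via the telescoping identity for commuting $(1,1)$-forms,
$$\dr_{t\phi}^{\,n}-\dr^{n}=(\dr_{t\phi}-\dr)\wedge\sum_{j=0}^{n-1}\dr_{t\phi}^{\,j}\wedge\dr^{\,n-1-j}=t\,\ddb{\phi}\wedge\sum_{j=0}^{n-1}\dr_{t\phi}^{\,j}\wedge\dr^{\,n-1-j},$$
and integrate by parts term by term, using $\int_{M}\phi\,\ddb{\phi}\wedge\Theta=-\int_{M}\sqrt{-1}\partial\phi\wedge\bar\partial\phi\wedge\Theta$ for $d$-closed $\Theta$. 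After relabelling the summation index, this gives the clean expression $g'(t)=\frac{t}{V}\sum_{i=0}^{n-1}B_{i}(t)$, written in terms of the very same nonnegative quantities $B_{i}(t)$.

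The proof then finishes by a coefficient comparison. With the two formulas above,
$$t\,g'(t)-\frac{n+1}{n}\,g(t)=\frac{t^{2}}{V}\sum_{i=0}^{n-1}\Bigl(1-\frac{i+1}{n}\Bigr)B_{i}(t)=\frac{t^{2}}{V}\sum_{i=0}^{n-1}\frac{n-1-i}{n}\,B_{i}(t)\ \ge\ 0,$$
since each coefficient $n-1-i\ge 0$ for $0\le i\le n-1$ and each $B_{i}(t)\ge 0$. Consequently $\frac{d}{dt}\bigl(t^{-(n+1)/n}g(t)\bigr)=t^{-(2n+1)/n}\bigl(t\,g'(t)-\frac{n+1}{n}g(t)\bigr)\ge 0$ on $(0,1]$, so $t^{-(n+1)/n}g(t)$ is nondecreasing there; comparing its values at $t$ and at $1$ gives $J_{\dr}(t\phi)=g(t)\le t^{(n+1)/n}g(1)=t^{(n+1)/n}J_{\dr}(\phi)$ for $0<t<1$. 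I do not expect a genuine obstacle: the only points needing care are the legitimacy of the integration by parts and of the Proposition's variational formula in the conic setting, but these are exactly the facts the paper has already adopted in declaring $J_{\dr}$, $F_{\dr,\mu}$ and $\nu_{\dr,\mu}$ well-defined for conic metrics, so no new analytic input is required.
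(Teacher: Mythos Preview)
Your proof is correct and follows essentially the same route as the paper: both derive the differential inequality $t\,g'(t)\ge\frac{n+1}{n}g(t)$ for $g(t)=J_{\dr}(t\phi)$ and integrate it. The only cosmetic difference is that the paper recognises $g'(t)=I_{\dr}(t\phi)/t$ and then invokes the already-stated bound $I_{\dr}\ge\frac{n+1}{n}J_{\dr}$, whereas you unpack both $g$ and $g'$ in terms of the nonnegative integrals $B_i(t)$ and re-prove that inequality by a direct coefficient comparison.
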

\begin{proof}
 Consider the path $\{t\phi\}_{0\leq t\leq 1},$ then we have
$$\frac{d}{dt}J_{\dr}(t\phi)=-\rev V\int_{M}\phi(\dr_{t\phi}^{n}-\dr^{n})=\frac{I_{\dr}(t\phi)}{t}\geq\frac{n+1}{n}
\frac{J_{\dr}(t\phi)}{t},$$
integrate this inequality then the corollary follows.
\end{proof}

Now we discuss some relations among these functionals and their behaviors under different background metrics. First we have a lemma on
the generalized energy $J_{\dr}$, and see \cite{LS} for its proof:
\begin{lemma}
suppose $\dr_{2}=\dr_{1}+\ddb\varphi,$ then for any $\phi\in P_{c}(M,\dr_{1})\cap P_{c}(M,\dr_{2}),$ we have
$$|J_{\dr_{1}}(\phi)-J_{\dr_{2}}(\phi)|\leq C(\dr_{1},\dr_{2}).$$
\end{lemma}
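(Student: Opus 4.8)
The plan is to reduce the entire estimate to the normalized functional $F_{\dr}^{0}(\phi)=J_{\dr}(\phi)-\rev V\int_{M}\phi\,\dr^{n}$, for which the cocycle identity and the variational formula $\frac{d}{dt}F_{\dr}^{0}(\phi_{t})=-\rev V\int_{M}\dot{\phi_{t}}\,\dr_{\phi_{t}}^{n}$ are already at our disposal from the preceding Proposition. Since $J_{\dr}(\phi)=F_{\dr}^{0}(\phi)+\rev V\int_{M}\phi\,\dr^{n}$ (differentiate both sides along a path and compare, both sides vanishing at $\phi=0$), subtracting the two instances $\dr=\dr_{1}$ and $\dr=\dr_{2}$ gives
$$J_{\dr_{1}}(\phi)-J_{\dr_{2}}(\phi)=\big(F_{\dr_{1}}^{0}(\phi)-F_{\dr_{2}}^{0}(\phi)\big)+\rev V\int_{M}\phi\,(\dr_{1}^{n}-\dr_{2}^{n}),$$
so it is enough to bound the two summands. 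The one analytic input I rely on is that the potential $\varphi$ (normalized, say, by $\int_{M}\varphi\,\dr_{1}^{n}=0$) is a \emph{bounded} function on $M$ --- the potential of a conic \ka\ metric relative to a smooth one is bounded --- together with the standard fact that every metric in $P_{c}(M,\dr_{j})$, and every mixed top-degree wedge product formed from $\dr_{1},\dr_{2}$ and their $P_{c}$-perturbations, has total mass prescribed by cohomology, since $D$ is a null set.

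For the linear term I would write $\dr_{1}^{n}-\dr_{2}^{n}=-\ddb\varphi\wedge T$ with $T=\sum_{i=0}^{n-1}\dr_{1}^{i}\wedge\dr_{2}^{n-1-i}$ a closed nonnegative form, integrate by parts to move $\ddb$ onto $\varphi$, and then use $\ddb\phi=\dr_{1,\phi}-\dr_{1}$ where $\dr_{1,\phi}=\dr_{1}+\ddb\phi$:
$$\rev V\int_{M}\phi\,(\dr_{1}^{n}-\dr_{2}^{n})=-\rev V\int_{M}\varphi\,(\dr_{1,\phi}-\dr_{1})\wedge T.$$
Because $|\varphi|\leq\norm{\varphi}_{C^{0}}$ and all the forms occurring are nonnegative, the absolute value of the right side is at most $\rev V\norm{\varphi}_{C^{0}}\big(\int_{M}\dr_{1,\phi}\wedge T+\int_{M}\dr_{1}\wedge T\big)$, and both integrals are cohomological pairings of $[\dr_{1}]$ and $[\dr_{2}]$ (using $[\dr_{1,\phi}]=[\dr_{1}]$), hence independent of $\phi$. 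For the $F^{0}$-difference, since $\dr_{2}=(\dr_{1})_{\varphi}$ the cocycle identity gives $F_{\dr_{2}}^{0}(\phi)=F_{\dr_{1}}^{0}(\phi+\varphi)-F_{\dr_{1}}^{0}(\varphi)$, so that
$$F_{\dr_{1}}^{0}(\phi)-F_{\dr_{2}}^{0}(\phi)=F_{\dr_{1}}^{0}(\varphi)+\big(F_{\dr_{1}}^{0}(\phi)-F_{\dr_{1}}^{0}(\phi+\varphi)\big).$$
The first term is a constant depending only on $\dr_{1},\dr_{2}$. For the second I would integrate the variational formula along the segment $\phi_{t}=\phi+t\varphi$, noting that $\dr_{1,\phi_{t}}=(1-t)\dr_{1,\phi}+t\dr_{2,\phi}$ is again a conic metric with the prescribed cone angles and hence stays in $P_{c}(M,\dr_{1})$; this yields $F_{\dr_{1}}^{0}(\phi)-F_{\dr_{1}}^{0}(\phi+\varphi)=\rev V\int_{0}^{1}\int_{M}\varphi\,\dr_{1,\phi_{t}}^{n}\,dt$, whose absolute value is at most $\rev V\norm{\varphi}_{C^{0}}\int_{0}^{1}\big(\int_{M}\dr_{1,\phi_{t}}^{n}\big)\,dt=\norm{\varphi}_{C^{0}}$. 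Summing the three bounds gives $|J_{\dr_{1}}(\phi)-J_{\dr_{2}}(\phi)|\leq C(\dr_{1},\dr_{2})$.

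The step I expect to need the most care is the legitimacy of the integrations by parts and of the mass identities across the conic locus $D$; but this is routine here, since the potentials involved are globally bounded, the currents have locally bounded potentials, and $D$ carries no mass, so one argues directly or after a standard regularization of $\phi$ and $\varphi$. The conceptual heart of the lemma is simply the boundedness of $\varphi$: that is precisely what upgrades the naive Cauchy--Schwarz estimate --- which would only give $O(\sqrt{J_{\dr}(\phi)})$ control --- to a bound uniform in $\phi$.
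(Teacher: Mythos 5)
The paper itself does not give a proof of this lemma; it simply refers to \cite{LS}. So there is no ``paper's own proof'' to compare against, and your argument has to be judged on its own merits. On that basis it is correct, and it is the expected kind of argument.

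To recapitulate: you split $J_{\dr_{1}}(\phi)-J_{\dr_{2}}(\phi)$ into $\bigl(F^{0}_{\dr_{1}}(\phi)-F^{0}_{\dr_{2}}(\phi)\bigr)$ and $\rev V\int_{M}\phi\,(\dr_{1}^{n}-\dr_{2}^{n})$; the first is handled by the cocycle identity, which isolates the constant $F^{0}_{\dr_{1}}(\varphi)$ and the increment $F^{0}_{\dr_{1}}(\phi)-F^{0}_{\dr_{1}}(\phi+\varphi)$, itself bounded by $\norm{\varphi}_{C^{0}}$ via the path derivative; the second is handled by factoring $\dr_{1}^{n}-\dr_{2}^{n}=-\ddb\varphi\wedge T$, integrating by parts, and using the cohomological mass identities. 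All three pieces are $\leq C(\dr_{1},\dr_{2})$, and the constant depends on $\phi$ only through the fact that $\dr_{1,\phi}$, $\dr_{2,\phi}$ are nonnegative currents in the fixed class. The identification $\dr_{1,\phi_{t}}=(1-t)\dr_{1,\phi}+t\dr_{2,\phi}$ is correct and is exactly the point that keeps the segment inside the cone of admissible metrics. Two cosmetic remarks: the relation $J_{\dr}(\phi)=F^{0}_{\dr}(\phi)+\rev V\int_{M}\phi\,\dr^{n}$ is the \emph{definition} of $F^{0}_{\dr}$ in this paper, so the parenthetical justification by differentiation is superfluous; and the bound on the linear term is $\leq 2n\norm{\varphi}_{C^{0}}$ since each of $\int_{M}\dr_{1,\phi}\wedge T$, $\int_{M}\dr_{1}\wedge T$ equals $nV$, not $V$ --- but this is of course still a constant. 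Your concluding caveat is the right one to flag: the integrations by parts and mass identities across $D$ need justification, and the justification is exactly the one you sketch (bounded potentials, locally bounded potentials for the currents involved, $D$ of zero mass, plus a cutoff near $D$). You also correctly isolate the key analytic hypothesis, namely $\varphi\in L^{\infty}$, which is implicit in the lemma's statement (it is what makes $C(\dr_{1},\dr_{2})$ finite) and holds in every application in the paper.
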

From this lemma and the cocycle property of $F_{\dr,\mu}(\phi), \nu_{\dr,\mu}(\phi)$, we observe that the properties of bounded from below
and properness are independent of the choice of metrics in the same \ka\ class.\\
Next we want to know the relation between $F_{\dr,\mu}(\phi), \nu_{\dr,\mu}(\phi).$ We want to prove that these two properties of the two
functionals are actually equivalent. Actually these are similar to the proofs by Berman \cite{Ber} and Li-Sun \cite{LS}, and we'd like
to use the proof in \cite{Li}:
\begin{lemma}\label{lem:equivalence}
(1)there exists a constant $C>0$ such that $$ \nu_{\dr,\mu}(\phi)\geq\mu F_{\dr,\mu}(\phi)-C,$$\\
(2)suppose $\psi$ solves $\dr_{\psi}^{n}=e^{H_{0}-\mu\phi}$ by Calabi-Yau theorem, then we have
$$\mu F_{\dr,\mu}(\phi)+\rev V\int_{M}H_{0}\dr^{n}\geq\nu_{\dr,\mu}(\psi),$$
In particular, by (1)(2) we know that $F_{\dr,\mu}$ is bounded from below is equivalent to that $\nu_{\dr,\mu}$ is bounded from below,\\
(3)in case that $\nu_{\dr,\mu}(\phi)\geq C_{1}J_{\dr}(\phi)-C_{2}$ where $C_{1}, C_{2}>0$, there exist constants $c,C'>0$ such that
 $$F_{\dr,\mu}(\phi)\geq c\nu_{\dr,\mu}(\phi)-C'.$$
\end{lemma}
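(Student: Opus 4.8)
The plan is to follow the relative-entropy approach of Berman \cite{Ber} and Li \cite{Li}, reducing the three assertions to the cocycle identities of the preceding proposition, Jensen's inequality, and the positivity $J_{\dr}\geq 0$. Throughout write $V=\int_M\dr^n$ and note that $\dr_\phi^n/V$ is a probability measure since $\int_M\dr_\phi^n=\int_M\dr^n$; the only conic-specific point to monitor is that $H_0=h_0-\sum_{i=1}^m(1-\beta_i)\log\|S_i\|_i^2+c$ has at worst a logarithmic singularity along $D$, so that $\int_M H_0\,\dr^n$ and $\int_M e^{H_0-\mu\phi}\dr^n$ are finite for every conic potential $\phi\in P_c(M,\dr)$ and the integrations by parts below go through.

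For (1) I would expand $\nu_{\dr,\mu}(\phi)-\mu F_{\dr,\mu}(\phi)$ directly from the definitions. Using $I_{\dr}(\phi)=\rev V\int_M\phi(\dr^n-\dr_\phi^n)$ the terms involving $\int_M\phi\,\dr^n$ cancel, and writing $\dr_\phi^n=e^u\dr^n$ one is left with
\[
\nu_{\dr,\mu}(\phi)-\mu F_{\dr,\mu}(\phi)=\rev V\int_M H_0\,\dr^n+\left(\rev V\int_M(u-H_0+\mu\phi)e^u\dr^n+\log\rev V\int_M e^{H_0-\mu\phi}\dr^n\right).
\]
The bracketed quantity is the relative entropy of $\dr_\phi^n/V$ with respect to the normalized measure proportional to $e^{H_0-\mu\phi}\dr^n$; equivalently it is $\geq 0$ by Jensen's inequality applied to the concave function $\log$ against the probability measure $\dr_\phi^n/V$. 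Hence $\nu_{\dr,\mu}(\phi)\geq\mu F_{\dr,\mu}(\phi)+\rev V\int_M H_0\,\dr^n$, which is (1).

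For (2), let $\psi$ solve $\dr_\psi^n=c_\phi\,e^{H_0-\mu\phi}\dr^n$, where $c_\phi$ makes $\int_M\dr_\psi^n=V$ (Calabi--Yau; $\psi$ is determined up to an additive constant, under which both $\nu_{\dr,\mu}$ and $F_{\dr,\mu}$ are invariant). Substituting $\log(\dr_\psi^n/\dr^n)=\log c_\phi+H_0-\mu\phi$ into the definition of $\nu_{\dr,\mu}(\psi)$ and using $I_{\dr}(\psi)=\rev V\int_M\psi(\dr^n-\dr_\psi^n)$ one gets
\[
\nu_{\dr,\mu}(\psi)=\mu F_{\dr}^0(\psi)+\log c_\phi+\rev V\int_M H_0\,\dr^n+\frac{\mu}{V}\int_M(\psi-\phi)\,\dr_\psi^n ,
\]
while $\mu F_{\dr,\mu}(\phi)=\mu F_{\dr}^0(\phi)+\log c_\phi$. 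Subtracting and inserting the cocycle relation $F_{\dr}^0(\phi)-F_{\dr}^0(\psi)=F_{\dr_\psi}^0(\phi-\psi)=J_{\dr_\psi}(\phi-\psi)-\rev V\int_M(\phi-\psi)\,\dr_\psi^n$ makes the first-order terms cancel exactly, leaving
\[
\mu F_{\dr,\mu}(\phi)+\rev V\int_M H_0\,\dr^n-\nu_{\dr,\mu}(\psi)=\mu\, J_{\dr_\psi}(\phi-\psi)\geq 0,
\]
which is (2). The equivalence of boundedness below then follows using $\mu>0$: if $\nu_{\dr,\mu}\geq -c$, apply (2) with $\psi=\psi_\phi$ for each $\phi$ to get $\mu F_{\dr,\mu}(\phi)\geq\nu_{\dr,\mu}(\psi_\phi)-\rev V\int_M H_0\dr^n\geq -c-\rev V\int_M H_0\dr^n$; conversely if $F_{\dr,\mu}\geq -c$ then (1) gives $\nu_{\dr,\mu}\geq\mu F_{\dr,\mu}-C\geq-\mu c-C$.

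For (3), the two identities above specialize, with $\psi=\psi_\phi$, to
\[
\nu_{\dr,\mu}(\phi)-\mu F_{\dr,\mu}(\phi)=\mathrm{Ent}\!\left(\dr_\phi^n\,\big\|\,\dr_\psi^n\right)+\rev V\int_M H_0\,\dr^n,\qquad \mu F_{\dr,\mu}(\phi)=\nu_{\dr,\mu}(\psi)+\mu\, J_{\dr_\psi}(\phi-\psi)-\rev V\int_M H_0\,\dr^n,
\]
where $\mathrm{Ent}(\dr_\phi^n\|\dr_\psi^n)=\rev V\int_M\log(\dr_\phi^n/\dr_\psi^n)\,\dr_\phi^n\geq 0$. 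Feeding in the hypothesis $\nu_{\dr,\mu}(\psi)\geq C_1 J_{\dr}(\psi)-C_2$, the desired inequality $F_{\dr,\mu}(\phi)\geq c\,\nu_{\dr,\mu}(\phi)-C'$ reduces to an estimate bounding $\mathrm{Ent}(\dr_\phi^n\|\dr_{\psi_\phi}^n)$ by a small multiple of $J_{\dr}(\psi_\phi)+J_{\dr_{\psi_\phi}}(\phi-\psi_\phi)$ plus a fixed constant; combined with a quasi-triangle inequality controlling $J_{\dr}(\phi)$ by the same two quantities (obtained from the cocycle formula together with $\frac{n+1}{n}J\leq I\leq(n+1)J$) and the scaling bound of Corollary \ref{cor:scale}, this closes the argument. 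This entropy estimate — which uses the full properness of $\nu_{\dr,\mu}$, not merely its boundedness — is the main obstacle, and it is at this point that one follows \cite{Li} (cf. \cite{Ber}, \cite{LS}); it is also where the conic situation requires the most care, since the relevant $L\log L$-type bounds and integrations by parts must be checked to remain valid for volume forms with the prescribed cone singularities along the $D_i$.
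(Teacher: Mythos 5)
Your proofs of (1) and (2) are correct and essentially identical to the paper's: in (1) you rewrite $\nu_{\dr,\mu}-\mu F_{\dr,\mu}$ as a constant plus the nonnegative quantity $\log\brk{\rev V\int_M e^f\dr_\phi^n}-\rev V\int_M f\dr_\phi^n$ with $f=H_0-\mu\phi-\log(\dr_\phi^n/\dr^n)$, and in (2) you substitute the Calabi--Yau solution into the definition of $\nu$ and use the cocycle identity for $F^0$ so that everything collapses to $\mu J_{\dr_\psi}(\phi-\psi)\geq 0$. Both match the paper.

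Part (3), however, is where your argument has a genuine gap: you reduce the claim to an unproved entropy estimate and explicitly leave it open, so this step is not actually closed. The paper's proof of (3) is a self-contained \emph{rescaling} argument that bypasses any entropy estimate and you should look for it. The key observations are: (i) from the properness hypothesis $\nu_{\dr,\mu}(\phi)\geq C_1 J_\dr(\phi)-C_2$ and the identity $\nu_{\dr,\mu+\delta}(\phi)=\nu_{\dr,\mu}(\phi)-\delta\,(I-J)_\dr(\phi)$, together with $(I-J)_\dr\leq nJ_\dr$, one finds a small $\delta>0$ for which $\nu_{\dr,\mu+\delta}$ is still bounded below, and hence by (1)--(2) so is $F_{\dr,\mu+\delta}$; (ii) writing $\mu\phi=(\mu+\delta)\cdot\frac{\mu}{\mu+\delta}\phi$ inside the logarithm gives the exact identity
\begin{equation*}
F_{\dr,\mu}(\phi)=F_\dr^0(\phi)+\frac{\mu+\delta}{\mu}\Bigl(F_{\dr,\mu+\delta}\bigl(\tfrac{\mu}{\mu+\delta}\phi\bigr)-F_\dr^0\bigl(\tfrac{\mu}{\mu+\delta}\phi\bigr)\Bigr),
\end{equation*}
in which the $\int_M\phi\,\dr^n$ terms cancel and only $J_\dr(\phi)-\frac{\mu+\delta}{\mu}J_\dr(\frac{\mu}{\mu+\delta}\phi)$ plus the bounded-below term $\frac{\mu+\delta}{\mu}F_{\dr,\mu+\delta}(\cdot)$ survive; (iii) Corollary \ref{cor:scale} gives $J_\dr(\frac{\mu}{\mu+\delta}\phi)\leq(\frac{\mu}{\mu+\delta})^{(n+1)/n}J_\dr(\phi)$, whence $F_{\dr,\mu}(\phi)\geq\bigl(1-(\tfrac{\mu}{\mu+\delta})^{1/n}\bigr)J_\dr(\phi)-C'$. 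This is both shorter and more elementary than the entropy route you gesture at, and it uses only the cocycle structure already established together with the scaling bound on $J$; none of the $L\log L$ machinery is needed. It also makes clear \emph{how} the full properness (rather than mere boundedness) of $\nu_{\dr,\mu}$ enters: it is exactly what guarantees that one can raise $\mu$ to $\mu+\delta$ and keep the functionals bounded below.
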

\begin{proof}
 (1) We modify the expression of twisted Mabuchi functional in the definition as below:
\begin{align*}
 \nu_{\dr_,\mu}(\phi)=&\rev V\int_{M}\log\frac{\dr_{\phi}^{n}}{\dr^{n}}\dr_{\phi}^{n}+\rev V\int_{M}H_{0}(\dr^{n}-\dr_{\phi}^{n})+
\mu(F_{\dr}^{0}(\phi)+\rev V\int_{M}\phi\dr_{\phi}^{n})\\=&\mu F_{\dr,\mu}(\phi)+\rev V\int_{M}H_{0}\dr^{n}+\rev V\int_{M}\log
\frac{\dr_{\phi}^{n}}{\dr^{n}}\dr_{\phi}^{n}-\rev V\int_{M}(H_{0}-\mu\phi)\dr_{\phi}^{n}\\&+\log\left(\rev V\int_{M}e^{H_{0}-\mu\phi}
\dr^{n}\right)\\=&\mu F_{\dr,\mu}(\phi)+\rev V\int_{M}H_{0}\dr^{n}+\log\left(\rev V\int_{M}e^{H_{0}-\mu\phi-\log\frac{\dr_{\phi}^{n}}
{\dr^{n}}}\dr_{\phi}^{n}\right)\\&-\rev V\int_{M}(H_{0}-\mu\phi-\log\frac{\dr_{\phi}^{n}}{\dr^{n}})\dr_{\phi}^{n},
\end{align*}
then(1) follows from the concavity of logarithm.\\
 (2) Still make use of the defintion and cocycle property, we have
\begin{align*}
 \nu_{\dr_,\mu}(\psi)=&\rev V\int_{M}\log\frac{\dr_{\psi}^{n}}{\dr^{n}}\dr_{\psi}^{n}+\rev V\int_{M}H_{0}(\dr^{n}-\dr_{\psi}^{n})+
\mu(F_{\dr}^{0}(\psi)+\rev V\int_{M}\psi\dr_{\psi}^{n})\\=&\rev V\int_{M}(H_{0}-\mu\phi)\dr_{\psi}^{n}+\rev V\int_{M}H_{0}(\dr^{n}
-\dr_{\psi}^{n})+\mu(F_{\dr}^{0}(\psi)+\rev V\int_{M}\psi\dr_{\psi}^{n})\\=&\rev V\int_{M}H_{0}\dr^{n}+\mu\left(F_{\dr}^{0}(\phi)-
F_{\dr_{\psi}}^{0}(\phi-\psi)+\rev V\int_{M}(\psi-\phi)\dr_{\psi}^{n}\right)\\=&\rev V\int_{M}H_{0}\dr^{n}+\mu\left(F_{\dr,\mu}(\phi)+
\log(\rev V\int_{M}e^{H_{0}-\mu\phi}\dr^{n})-J_{\dr_{\psi}}(\phi-\psi))\right),
\end{align*}
then (2) follows from that $e^{H_{0}-\mu\phi}\dr^{n}=\dr_{\psi}^{n}$ and $J_{\dr_{\psi}}(\phi-\psi)\geq 0.$\\
 (3) From the assumption, we have a small $\delta>0$ such that $\nu_{\dr_,\mu+\delta}(\phi)=\nu_{\dr_,\mu}(\phi)-\delta(I-J)_{\dr}(\phi)$
is bounded from below, so is $F_{\dr,\mu+\delta}(\phi)$ by (2). Then we can compute that
\begin{align*}
 F_{\dr,\mu}(\phi)=&F_{\dr}^{0}(\phi)-\frac{\mu+\delta}{\mu}\frac{1}{\nu+\delta}\log\left(\rev V\int_{M}e^{H_{0}-(\mu+\delta)\frac{\mu}
{\mu+\delta}\phi}\dr^{n}\right)\\=&F_{\dr}^{0}(\phi)+\frac{\mu+\delta}{\mu}(F_{\dr,\mu+\delta}(\frac{\mu}{\mu+\delta}
\phi)-F_{\dr}^{0}
(\frac{\mu}{\mu+\delta}\phi))\\ \geq&J_{\dr}(\phi)-\frac{\mu+\delta}{\mu}J_{\dr}(\frac{\mu}{\mu+\delta}\phi)-C'\\ \geq&
(1-(\frac{\mu}{\mu+\delta})^{\rev n})J_{\dr}(\phi)-C',
\end{align*}
where the last inequality follows from Corollary \ref{cor:scale}.
\end{proof}
To prove properness of the functionals in case of the existence of the conic metric $\dr=\dr_{\varphi}, $ we need to verify that they are
bounded from above:
\begin{theorem}\label{thm-lower bound}
 If the singular Monge-Ampere equation \eqref{eq:cma0} has a solution $\varphi,$ i.e there exists a conic \ka\ metric $\dr_{\varphi}=
\dr_{0}+\ddb\varphi$ satisfying the equation \eqref{eq:conic}, then $\varphi$ attains the minimum of the functional $F_{\dr_{0},\mu}$ on
the space $P_{c}(M,\dr_{0}).$ In particular $F_{\dr_{0},\mu}$ is bounded from above.
\end{theorem}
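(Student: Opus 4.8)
The plan is to derive the theorem from the cocycle identities above together with the fact that a conic (twisted) K\"{a}hler--Einstein metric minimizes its own Ding functional; the latter is the essential point, which I would handle by criticality plus geodesic convexity, in the manner of \cite{Ber} and \cite{LS}.

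First I would reduce to the case in which the background metric itself solves the equation. Using the cocycle condition for $F^{0}_{\dr_{0}}$ and for $F_{\dr_{0},\mu}$, for any $\phi\in P_{c}(M,\dr_{0})$ one has $F_{\dr_{0},\mu}(\phi)=F_{\dr_{0},\mu}(\varphi)+F_{\dr_{\varphi},\mu}(\phi-\varphi)$. Since $\varphi$ solves \eqref{eq:cma0}, $\dr_{\varphi}^{n}=e^{H_{0}-\mu\varphi}\dr_{0}^{n}$ with $\int_{M}\dr_{\varphi}^{n}=V$; feeding this into the definitions, for $\psi\in P_{c}(M,\dr_{\varphi})$ one gets $F_{\dr_{\varphi},\mu}(\psi)=F^{0}_{\dr_{\varphi}}(\psi)-\rev\mu\log\big(\rev V\int_{M}e^{-\mu\psi}\dr_{\varphi}^{n}\big)$, that is, $F_{\dr_{\varphi},\mu}$ is again a twisted Ding functional but now the background volume form obeys the conic K\"{a}hler--Einstein equation with constant ``$H$''. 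In particular $F_{\dr_{\varphi},\mu}(0)=0$ and $F_{\dr_{0},\mu}(\varphi)=F^{0}_{\dr_{0}}(\varphi)$, which is finite. Thus the theorem is equivalent to: $F_{\dr_{\varphi},\mu}(\psi)\ge 0$ for every $\psi\in P_{c}(M,\dr_{\varphi})$, with equality at $\psi=0$.

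To prove this I would first note that $\psi=0$ is a critical point: by the first-variation formulas, $\frac{d}{dt}F_{\dr_{\varphi},\mu}(\psi_{t})=-\rev V\int_{M}\dot\psi_{t}(\dr_{\varphi}+\ddb\psi_{t})^{n}+\big(\int_{M}e^{-\mu\psi_{t}}\dr_{\varphi}^{n}\big)^{-1}\int_{M}\dot\psi_{t}e^{-\mu\psi_{t}}\dr_{\varphi}^{n}$, and the two terms cancel at $\psi_{0}=0$ because $\int_{M}\dr_{\varphi}^{n}=V$. Next, writing $F_{\dr_{\varphi},\mu}=F^{0}_{\dr_{\varphi}}-\rev\mu\log\big(\rev V\int_{M}e^{-\mu(\cdot)}\dr_{\varphi}^{n}\big)$, the functional $F^{0}_{\dr_{\varphi}}$ is affine along (weak) geodesics in $P_{c}(M,\dr_{\varphi})$ (integrate the second variation of $F^{0}$ against the geodesic equation), while the second term is convex along such geodesics by Berndtsson's theorem on the plurisubharmonic variation of Bergman kernels, applied to the psh curve $\mu\psi_{t}$; this is the conic version used in \cite{Ber}, \cite{LS}. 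Hence $F_{\dr_{\varphi},\mu}$ is geodesically convex; since a geodesic joining $0$ to $\psi$ exists in $P_{c}(M,\dr_{\varphi})$ (the conic analogue of Chen's theorem) and $F_{\dr_{\varphi},\mu}$ is continuous along it, the critical point $0$ is a global minimum, so $F_{\dr_{\varphi},\mu}(\psi)\ge 0$. Combined with the previous step, $F_{\dr_{0},\mu}(\phi)\ge F_{\dr_{0},\mu}(\varphi)$ for all $\phi$, with equality at $\varphi$, so $\varphi$ attains the minimum; in particular the infimum of $F_{\dr_{0},\mu}$ on $P_{c}(M,\dr_{0})$ is the finite value $F^{0}_{\dr_{0}}(\varphi)$.

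The main obstacle is the conic analysis along $D$: one must construct (weak) geodesics inside $P_{c}(M,\dr_{\varphi})$ that retain the cone angles $2\pi\beta_{i}$ along each $D_{i}$ and are regular enough for the second-variation computation of $F^{0}$ and for Berndtsson's positivity argument. This is exactly where the model metric $\dr_{0,p}$ and the conic regularity theory used in \cite{Ti6} enter, and where one checks that a simple normal crossing divisor brings nothing new --- near a point of $D$ the geometry is a product of one-dimensional cones, so every local estimate reduces to the single-component case of \cite{LS}, \cite{Ti6}. One should also observe that the twisting form $\Omega$ is irrelevant to this argument, since it enters only through the fixed term $\rev V\int_{M}H_{0}\dr^{n}$ already absorbed in the first step. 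An alternative route, avoiding weak geodesics, is a continuity-method monotonicity argument in the spirit of Bando--Mabuchi and Ding--Tian, whose difficulty is once more the uniform conic a priori estimates near $D$.
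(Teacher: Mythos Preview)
Your argument is correct in outline but takes a genuinely different route from the paper. You reduce, via the cocycle identity, to showing $F_{\dr_{\varphi},\mu}(\psi)\ge 0$ with base the solution $\dr_{\varphi}$, and then invoke criticality plus Berndtsson-type geodesic convexity of the Ding functional; this is precisely the strategy of \cite{LS} and \cite{Ber}, which the paper explicitly acknowledges (``A parallel result is proved in \cite{LS}'') before choosing instead to extend the Ding--Tian continuity-method proof from \cite{DT},\cite{Ti4}. Concretely, the paper changes base to an \emph{arbitrary} $\dr_{\phi}$, rewrites the equation for $\varphi-\phi$ there, and runs the continuity path \eqref{eq:conic-continuity} from $t=0$ (solvable by \cite{Br}) to $t=\mu$, using $\lambda_{1}(-\Delta_{t})>t$ from \cite{JMR} for openness; the integrated monotonicity identity \eqref{eq:mono2} then gives $F_{\dr_{\phi},\mu}(\varphi-\phi)\le 0$, whence $F_{\dr_{0},\mu}(\phi)\ge F_{\dr_{0},\mu}(\varphi)$ by cocycle. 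The trade-off is clear: your approach is conceptually cleaner and needs no continuity path, but it rests on the existence and regularity of weak geodesics in the conic class $P_{c}(M,\dr_{\varphi})$ along a simple normal crossing divisor and on the applicability of Berndtsson's positivity in that setting --- exactly the ``main obstacle'' you flag, and a nontrivial input you are citing rather than proving. The paper's route avoids weak geodesics entirely at the price of importing the conic spectral gap from \cite{JMR} and the $t=0$ solvability from \cite{Br}. You mention the Bando--Mabuchi/Ding--Tian alternative only at the end; that alternative is in fact what the paper does.
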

\begin{proof}
 A parallel result is proved in \cite{LS}, but we'd like to extend Ding-Tian's proof \cite{DT}\cite{Ti4} to our conic case. Let's consider
the continuity path of the complex Monge-Ampere equation:
\begin{equation}\label{eq:conic-continuity}
 (\dr_{0}+\ddb\varphi_{t})^{n}=e^{H_{0}-t\varphi_{t}}\dr_{0}^{n}.
\end{equation}
Now we know that when $t=\mu$ this equation is solvable. Actually by \cite{Br}, we know that it is also solvable when $t=0.$ Now when
$0<t<\mu,$ by implicit function theorem, we need to consider whether the linearized operator of the \eqref{eq:conic-continuity},
$\Delta_{t}+t$, is invertible. We know that in smooth case, by Bochner's formula, as $Ric(\dr_{t})>t\dr_{t},$ it is invertible and we can
 prove the openness of solvable set for t. However, in conic case, \cite{JMR} gives a parallel result. By their argument, we have
$\Delta_{t}$ as the Friedrichs extension of the Laplacian associated to $\dr_{t}=\dr_{0}+\ddb\varphi_{t}$ and $\lambda_{1}(-\Delta_{t})>t$,
so the openness is true. We can set $\{\varphi_{t}\}$ as a continuous family of solutions of \eqref{eq:conic-continuity}, then we can do
computations as \cite{Ti4} in a weak sense.\\
First take the derivative of \eqref{eq:conic-continuity} with respect to t, we have
$$\Delta_{t}\dot{\varphi_{t}}=-\varphi_{t}-t\dot{\varphi_{t}},$$
where $\Delta_{t}$ is in a weak sense as \cite{JMR}. As for all t, we have $\int_{M}e^{H_{0}-t\varphi_{t}}\dr_{0}^{n}=V, $
take the derivative with respect to t we get $$\int_{M}(\varphi_{t}+t\dot{\varphi_{t}})e^{H_{0}-t\varphi_{t}}\dr_{0}^{n}=0.$$
Now make use of the formulas in the beginning of this section, we have
\begin{align*}
 \frac{d}{dt}(I_{\dr_{0}}(\varphi_{t})-J_{\dr_{0}}(\varphi_{t}))&=\rev V\int_{M}\dot{\varphi_{t}}(\dr_{0}^{n}-\dr_{t}^{n})-\rev V\int_{M}
\varphi_{t}\Delta_{t}\dot{\varphi_{t}}\dr_{t}^{n}-\rev V\int_{M}\dot{\varphi_{t}}(\dr_{0}^{n}-\dr_{t}^{n})\\&=\rev V\int_{M}\varphi_{t}
(\varphi_{t}+t\dot{\varphi_{t}})\dr_{t}^{n}\\&=-\frac{d}{dt}(\rev V\int_{M}\varphi_{t}e^{H_{0}-t\varphi_{t}}\dr_{0}^{n})+\rev V\int_{M}
\dot{\varphi_{t}}e^{H_{0}-t\varphi_{t}}\dr_{0}^{n}\\&=-\frac{d}{dt}(\rev V\int_{M}\varphi_{t}\dr_{t}^{n})-\rev {tV}\int_{M}\varphi_{t}
\dr_{t}^{n}.
\end{align*}
From this, we have
\begin{equation}\label{eq:mono1}
 \frac{d}{dt}\left(t(I_{\dr_{0}}(\varphi_{t})-J_{\dr_{0}}(\varphi_{t}))\right)-(I_{\dr_{0}}(\varphi_{t})-J_{\dr_{0}}
 (\varphi_{t}))=-\frac{d}{dt}(\rev V\int_{M}\varphi_{t}\dr_{t}^{n}),
\end{equation}
Integrating this from 0 to t, we have
$$t(I_{\dr_{0}}(\varphi_{t})-J_{\dr_{0}}(\varphi_{t}))-\int_{0}^{t}(I_{\dr_{0}}(\varphi_{s})-J_{\dr_{0}}(\varphi_{s}))ds
=-\frac{t}{V}
\int_{M}\varphi_{t}\dr_{t}^{n}.$$
By the definition, it's just
\begin{equation}\label{eq:mono2}
 -\int_{0}^{t}(I_{\dr_{0}}(\varphi_{s})-J_{\dr_{0}}(\varphi_{s}))ds=t(J_{\dr_{0}}(\varphi_{t})-\rev V\int_{M}\varphi_{t}\dr_{0}^{n})
=tF_{\dr_{0}}^{0}(\varphi_{t}).
\end{equation}
As we have $\int_{M}e^{H_{0}-\mu\varphi}\dr_{0}^{n}=V,$ we derive that $F_{\dr_{0},\mu}(\varphi)\leq 0.$\\
  Now let's consider $\phi\in P_{c}(M,\dr_{0}).$ If $\dr_{\phi}=\dr_{0}+\ddb\phi$ is smooth, then we have
$$Ric(\dr_{\phi})=\mu\dr_{\varphi}+\Omega_{\phi}, $$ where $\Omega_{\phi}$ is not necessarily nonnegative. Compare it
with \eqref{eq:conic}, we have $$(\dr_{\phi}+\ddb(\varphi-\phi))^{n}=e^{h_{\phi}-\mu(\varphi-\phi)-\sum_{i=1}^{m}(1-\beta_{i})
\log||S_{i}||_{i}^{2}
+c_{\phi}}\dr_{\phi}^{n},$$
where we take $\ddb h_{\phi}=\Omega_{\phi}-\Omega-\sum_{i=1}^{m}(1-\beta_{i})(R(||\cdot||_{i}).$
Then all the arguments are parallel and we have $F_{\dr_{\phi},\mu}(\varphi-\phi)\leq 0.$ Now let's consider the case when
 $\dr_{\phi}$ is conic along D. Here we have the equation $$Ric(\dr_{\phi})=\mu\dr_{\varphi}+\sum_{i=1}^{k}2\pi(1-\beta_{i})
[D_{i}]+\Omega_{\phi}.$$ Compare it with \eqref{eq:conic}, we have $$(\dr_{\phi}+\ddb(\varphi-\phi))^{n}=e^{h_{\phi}-\mu
(\varphi-\phi)+c_{\phi}}\dr_{\phi}^{n},$$ where we have $\ddb h_{\phi}=\Omega_{\phi}-\Omega.$ In this case, all the arguments
are similar to smooth case and we get the same conclusion. Now by cocycle condition, we have
$$F_{\dr_{0},\mu}(\phi)=F_{\dr_{0},\mu}(\varphi)-F_{\dr_{\phi},\mu}(\varphi-\phi)\geq F_{\dr_{0},\mu}(\varphi).$$
This theorem is proved.
\end{proof}

\section{log $\alpha$-invariant, properness of twisted energies}

    We want to prove the properness of the twisted Ding energy. First we want to introduce log $\alpha$-invariant, then let's see how
to use this invariant to prove the properness of twisted Mabuchi energy in case that $\mu$ is small. Then make use of concavity of
energies to prove the properness of energies in general case.\\
    Recall that $\alpha$-invariant in smooth case was introduced by Tian \cite{Ti1} in 1980s. In \cite{Ber} \cite{JMR} this invariant
is generalized to conic case. We introduce so-called log $\alpha$-invariant here, following \cite{LS}:
\begin{definition}
 Fix a smooth volume form $vol,$ for any \ka\ class $[\dr]$, we define log $\alpha$-invariant as below:
\begin{align*}
\alpha(\dr,D)=&\sup\{\alpha>0: \exists C_{\alpha}<\infty\quad s.t. \quad\rev V\int_{M}e^{\alpha(\sup\phi-\phi)}\frac{vol}{\prod_{i=1}^{m}
|S_{i}|^{2(1-\beta_{i})}}\leq C_{\alpha}\\ &for\ any\ \phi\in P_{c}(M,\dr)\}.
\end{align*}
\end{definition}
    Berman \cite{Ber} has an estimate for the positive lower bound of log $\alpha$-invariant in conic case, i.e, there exists a positive
number $\alpha_{0}$ such that $\alpha(\dr,D)\geq\alpha_{0}>0.$ Using this estimate, we can prove that the twisted Mabuchi energy is proper
when $\mu$ is small enough:
\begin{theorem}\label{thmmabuchi-proper}
 Suppose $\alpha(\dr,D)\geq\alpha_{0}>\frac{n}{n+1}\mu>0,$ then we have $$\nu_{\dr_{0},\mu}(\phi)\geq\epsilon J_{\dr_{0}}(\phi)-C,$$
where $\epsilon, C$ are constants depending on $\alpha_{0},\mu.$
\end{theorem}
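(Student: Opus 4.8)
The plan is to run the classical $\alpha$-invariant argument directly on the twisted Mabuchi functional, feeding Berman's lower bound for the log $\alpha$-invariant into a Jensen-type entropy estimate. Set $\mathrm{Ent}(\phi):=\rev V\int_M\log\frac{\dr_\phi^n}{e^{H_0}\dr_0^n}\dr_\phi^n$; then the second expression in Definition~\ref{def:ding-ma} rewrites as
$$\nu_{\dr_0,\mu}(\phi)=\mathrm{Ent}(\phi)+\rev V\int_M H_0\,\dr_0^n-\mu\bigl(I_{\dr_0}(\phi)-J_{\dr_0}(\phi)\bigr),$$
in which $\rev V\int_M H_0\,\dr_0^n$ is a finite constant because each $\log\|S_i\|_i^2$ is locally integrable. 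Since $\nu_{\dr_0,\mu}$, $I_{\dr_0}$, $J_{\dr_0}$ and $\mathrm{Ent}$ are unchanged under adding a constant to $\phi$, I would first normalize $\sup_M\phi=0$. The choice of $c$ in $H_0$ makes $\int_M e^{H_0}\dr_0^n=V$, so both $\dr_\phi^n/V$ and $e^{H_0}\dr_0^n/V$ are probability measures, and concavity of $\log$ gives, for every $g$,
$$\mathrm{Ent}(\phi)\ \ge\ \rev V\int_M g\,\dr_\phi^n-\log\Bigl(\rev V\int_M e^{g}\,e^{H_0}\dr_0^n\Bigr).$$

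Next I would pick $\lambda$ with $\tfrac{n}{n+1}\mu<\lambda<\alpha_0$, which is possible exactly because $\alpha_0>\tfrac{n}{n+1}\mu$, and apply the bound above with $g=-\lambda\phi\ge0$. For the integral term, $e^{H_0}\dr_0^n=e^{h_0+c}\prod_i\|S_i\|_i^{-2(1-\beta_i)}\dr_0^n$ has exactly the same log-pole singularities along $D$, with the same exponents $1-\beta_i$, as the reference measure $\frac{vol}{\prod_i|S_i|^{2(1-\beta_i)}}$ in the definition of $\alpha(\dr_0,D)$, so the two measures have ratio bounded above and below; hence there is a uniform $A$ with $e^{H_0}\dr_0^n\le A\,\frac{vol}{\prod_i|S_i|^{2(1-\beta_i)}}$, and Berman's estimate $\alpha(\dr_0,D)\ge\alpha_0>\lambda$ yields
$$\rev V\int_M e^{-\lambda\phi}\,e^{H_0}\dr_0^n=\rev V\int_M e^{\lambda(\sup_M\phi-\phi)}\,e^{H_0}\dr_0^n\ \le\ A\,C_\lambda<\infty$$
uniformly for $\phi\in P_c(M,\dr_0)$. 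For the other term, the normalization $\sup_M\phi=0$ gives $-\rev V\int_M\phi\,\dr_\phi^n=I_{\dr_0}(\phi)-\rev V\int_M\phi\,\dr_0^n\ge I_{\dr_0}(\phi)$, hence $\mathrm{Ent}(\phi)\ge\lambda\,I_{\dr_0}(\phi)-\log(A\,C_\lambda)$.

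It then remains to plug this into the expression for $\nu_{\dr_0,\mu}$ and use the elementary chain $0\le\tfrac{n+1}{n}J_{\dr_0}\le I_{\dr_0}\le(n+1)J_{\dr_0}$, which gives $I_{\dr_0}\ge\tfrac{n+1}{n}(I_{\dr_0}-J_{\dr_0})$ and $I_{\dr_0}-J_{\dr_0}\ge\tfrac1n J_{\dr_0}$. Combining,
$$\nu_{\dr_0,\mu}(\phi)\ \ge\ \Bigl(\tfrac{n+1}{n}\lambda-\mu\Bigr)\bigl(I_{\dr_0}(\phi)-J_{\dr_0}(\phi)\bigr)-C\ \ge\ \tfrac1n\Bigl(\tfrac{n+1}{n}\lambda-\mu\Bigr)J_{\dr_0}(\phi)-C,$$
and $\tfrac{n+1}{n}\lambda-\mu>0$ by the choice of $\lambda$, so $\epsilon:=\tfrac1n\bigl(\tfrac{n+1}{n}\lambda-\mu\bigr)$ works. (Alternatively, the same Jensen estimate applied to the twisted Ding functional gives $F_{\dr_0,\mu}\ge\epsilon'J_{\dr_0}-C'$, after which Lemma~\ref{lem:equivalence}(1) transfers properness to $\nu_{\dr_0,\mu}$; the route above avoids the detour.) The step I expect to need the most care is not the inequalities but the conic bookkeeping: justifying the identity defining $\mathrm{Ent}(\phi)$ and the Jensen step when $\dr_\phi^n$ has density degenerating along $D$, and checking that Berman's log $\alpha$-invariant bound really applies to $e^{H_0}\dr_0^n$ in place of a smooth volume form. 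As elsewhere in the paper, this should be handled by working on $M\setminus D$, using the local integrability of $\log\|S_i\|_i^2$, and using that all the functionals are finite on $P_c(M,\dr_0)$.
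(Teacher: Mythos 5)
Your proposal is correct and follows essentially the same route as the paper's own argument: both rewrite $\nu_{\dr_0,\mu}$ to isolate the relative-entropy term $\rev V\int_M\log\frac{\dr_\phi^n}{e^{H_0}\dr_0^n}\dr_\phi^n$, bound it from below by Jensen's inequality against $\alpha(\sup\phi-\phi)$, invoke Berman's lower bound on the log $\alpha$-invariant (using that $e^{H_0}\dr_0^n$ is uniformly comparable to the reference measure $\frac{vol}{\prod_i|S_i|^{2(1-\beta_i)}}$), and then close with the elementary $I$--$J$ chain. The only cosmetic differences are the Donsker--Varadhan phrasing of the Jensen step and a slightly different arrangement of the $I$--$J$ inequalities at the end, which produces a somewhat smaller (but still positive) $\epsilon$ than the paper's $\frac{n+1}{n}\alpha-\mu$; this does not affect properness.
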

\begin{proof}
 Similar to \cite{JMR} \cite{LS} \cite{Ti4}, make use of logarithm property, for $\frac{n}{n+1}\mu<\alpha<\alpha_{0},$
we have
\begin{align*}
 \log C_{\alpha}&\geq\log\left(\rev V\int_{M}e^{\alpha(\sup\phi-\phi)}\frac{e^{H_{0}}\dr_{0}^{n}}{\prod_{i=1}^{m}|S_{i}|^{2(1-\beta_{i})}}
\right)\\&\geq\log\left(\rev V\int_{M}e^{\alpha(\sup\phi-\phi)-\log\frac{\prod_{i=1}^{m}|S_{i}|^{2(1-\beta_{i})}\dr_{\phi}^{n}}{\dr_{0}
^{n}}+H_{0}}\dr_{\phi}^{n}\right)\\&\geq\rev V\int_{M}\left(H_{0}-\frac{\prod_{i=1}^{m}|S_{i}|^{2(1-\beta_{i})}\dr_{\phi}^{n}}{\dr_{0}
^{n}}\right)\dr_{\phi}^{n}+\frac{\alpha}{V}\int_{M}(\sup\phi-\phi)\dr_{\phi}^{n}\\&\geq\rev V\int_{M}\left(H_{0}-\frac{\prod_{i=1}^{m}
|S_{i}|^{2(1-\beta_{i})}\dr_{\phi}^{n}}{\dr_{0}^{n}}\right)\dr_{\phi}^{n}+\alpha I_{\dr_{0}}(\phi).
\end{align*}
By the defintion of twisted Mabuchi energy, we have
\begin{align*}
 \nu_{\dr_{0},\mu}(\phi)&=\rev V\int_{M}\log\frac{\dr_{\phi}^{n}}{\dr_{0}^{n}}\dr_{\phi}^{n}+\rev V\int_{M}H_{0}(\dr_{0}^{n}-
\dr_{\phi}^{n})-\mu(I_{\dr_{0}}(\phi)-J_{\dr_{0}}(\phi))\\&\geq\log C_{\alpha}+\rev V\int_{M}H_{0}\dr_{0}^{n}+\alpha I_{\dr_{0}}(\phi)
-\mu(I_{\dr_{0}}(\phi)-J_{\dr_{0}}(\phi))\\&\geq(\alpha-\frac{n}{n+1}\mu)I_{\dr_{0}}(\phi)-C\\&\geq(\frac{n+1}{n}\alpha-\mu)J_{\dr_{0}}
(\phi)-C.
\end{align*}
The proof is finished.
\end{proof}
As the equivalence of the properness of twisted Ding energy and Mabuchi energy, we have such a corollary very easily:
\begin{corollary}
 When $\alpha(\dr,D)\geq\alpha_{0}>\frac{n}{n+1}\mu>0,$ we have $$F_{\dr_{0},\mu}(\phi)\geq\epsilon J_{\dr_{0}}(\phi)-C,$$
where $\epsilon, C$ are constants depending on $\alpha_{0},\mu.$
\end{corollary}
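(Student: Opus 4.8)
The plan is to obtain this corollary as an immediate consequence of Theorem~\ref{thmmabuchi-proper} combined with part (3) of Lemma~\ref{lem:equivalence}; there is no genuinely new analytic input, only a chaining of two inequalities already established. First I would record that, under the standing hypothesis $\alpha(\dr,D)\geq\alpha_{0}>\frac{n}{n+1}\mu>0$, Theorem~\ref{thmmabuchi-proper} produces constants $\epsilon'>0$ and $C''>0$ (depending only on $\alpha_{0}$ and $\mu$) such that
\begin{equation*}
\nu_{\dr_{0},\mu}(\phi)\geq\epsilon' J_{\dr_{0}}(\phi)-C''\qquad\text{for all }\phi\in P_{c}(M,\dr_{0}).
\end{equation*}
Since $J_{\dr_{0}}(\phi)\geq 0$, this in particular says $\nu_{\dr_{0},\mu}$ is bounded from below, so the hypotheses of Lemma~\ref{lem:equivalence}(3) are met with $C_{1}=\epsilon'$ and $C_{2}=C''$.

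Next I would invoke Lemma~\ref{lem:equivalence}(3) to get constants $c,C'>0$ with $F_{\dr_{0},\mu}(\phi)\geq c\,\nu_{\dr_{0},\mu}(\phi)-C'$, and then simply substitute the previous bound:
\begin{equation*}
F_{\dr_{0},\mu}(\phi)\geq c\bigl(\epsilon' J_{\dr_{0}}(\phi)-C''\bigr)-C'=(c\epsilon')\,J_{\dr_{0}}(\phi)-(cC''+C').
\end{equation*}
Setting $\epsilon=c\epsilon'>0$ and $C=cC''+C'>0$ yields the claimed inequality, with both constants depending only on $\alpha_{0}$ and $\mu$ through the constants in Theorem~\ref{thmmabuchi-proper} and Lemma~\ref{lem:equivalence}.

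The one small point requiring a word of justification — and the closest thing to an obstacle here — is that Lemma~\ref{lem:equivalence} was phrased with a generic reference metric $\dr$, whereas the present statement is for the smooth background metric $\dr_{0}$. I would note that all three parts of Lemma~\ref{lem:equivalence} apply verbatim with $\dr$ replaced by $\dr_{0}$: the function $H_{0}$, the functionals $F_{\dr_{0},\mu}$, $F_{\dr_{0}}^{0}$, $\nu_{\dr_{0},\mu}$, and the Calabi--Yau solvability used in the proof of (2)--(3) are all set up relative to $\dr_{0}$ from the start. Alternatively one can appeal to the earlier remark that boundedness from below and properness are independent of the choice of metric within the fixed \ka\ class, and then transfer the inequality back to $\dr_{0}$ via the cocycle identity at the cost of an additive constant. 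Either way this is routine bookkeeping, and the substance of the corollary has already been carried by Theorem~\ref{thmmabuchi-proper}.
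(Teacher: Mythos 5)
Your proof is correct and follows essentially the route the paper indicates, namely combining Theorem~\ref{thmmabuchi-proper} with the Ding--Mabuchi equivalence in Lemma~\ref{lem:equivalence}(3) and chaining the two inequalities. (One small remark: the displayed computation in the proof of Lemma~\ref{lem:equivalence}(3) in fact lands directly on a bound of the form $F_{\dr_{0},\mu}(\phi)\geq \mathrm{const}\cdot J_{\dr_{0}}(\phi)-C'$, so the intermediate pass through $\nu_{\dr_{0},\mu}$ is not even needed, but your chaining is equally valid.)
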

Until now we only get the properness when $\mu$ is small enough. For general case, we need to apply continuity method and the concavity
property of the energy which is shown below to increase $\mu$. Here is a lemma which allows us to increase $\mu$ and see also in \cite{LS}:
\begin{lemma}
Suppose $0<\mu_{0}<\mu_{1},$ write $\mu=(1-t)\mu_{0}+t\mu_{1}$ where $0\leq t\leq 1,$ we have
$$\mu F_{\dr_{0},\mu}(\phi)\geq(1-t)\mu_{0}F_{\dr_{0},\mu}(\phi)+t\mu_{1}F_{\dr_{0},\mu}(\phi).$$
\end{lemma}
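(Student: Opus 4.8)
The plan is to reduce the claimed inequality to the elementary convexity of the map $\mu\mapsto\log\left(\rev V\int_{M}e^{H_{0}-\mu\phi}\dr_{0}^{n}\right)$, which is nothing more than H\"older's inequality. First I would recall from Definition \ref{def:ding-ma} that
$$F_{\dr_{0},\mu}(\phi)=F_{\dr_{0}}^{0}(\phi)-\rev\mu\log\left(\rev V\int_{M}e^{H_{0}-\mu\phi}\dr_{0}^{n}\right),\qquad F_{\dr_{0}}^{0}(\phi)=J_{\dr_{0}}(\phi)-\rev V\int_{M}\phi\dr_{0}^{n},$$
and that $F_{\dr_{0}}^{0}(\phi)$ does not depend on $\mu$. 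Since $\mu=(1-t)\mu_{0}+t\mu_{1}>0$, multiplying by $\mu$ gives
$$\mu F_{\dr_{0},\mu}(\phi)=\mu F_{\dr_{0}}^{0}(\phi)-\log\left(\rev V\int_{M}e^{H_{0}-\mu\phi}\dr_{0}^{n}\right),$$
and the first term already splits linearly, $\mu F_{\dr_{0}}^{0}(\phi)=(1-t)\mu_{0}F_{\dr_{0}}^{0}(\phi)+t\mu_{1}F_{\dr_{0}}^{0}(\phi)$. So the whole lemma (with $\mu_{0},\mu_{1}$ in the subscripts on the right-hand side) comes down to the single inequality
$$\log\left(\rev V\int_{M}e^{H_{0}-\mu\phi}\dr_{0}^{n}\right)\le(1-t)\log\left(\rev V\int_{M}e^{H_{0}-\mu_{0}\phi}\dr_{0}^{n}\right)+t\log\left(\rev V\int_{M}e^{H_{0}-\mu_{1}\phi}\dr_{0}^{n}\right).$$

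To prove this last inequality I would write $H_{0}-\mu\phi=(1-t)(H_{0}-\mu_{0}\phi)+t(H_{0}-\mu_{1}\phi)$, hence pointwise on $M\setminus D$
$$e^{H_{0}-\mu\phi}=\left(e^{H_{0}-\mu_{0}\phi}\right)^{1-t}\left(e^{H_{0}-\mu_{1}\phi}\right)^{t},$$
and apply H\"older's inequality with the conjugate exponents $p=\rev{1-t}$ and $q=\rev t$ (the endpoint cases $t=0,1$ being trivial), obtaining
$$\int_{M}e^{H_{0}-\mu\phi}\dr_{0}^{n}\le\left(\int_{M}e^{H_{0}-\mu_{0}\phi}\dr_{0}^{n}\right)^{1-t}\left(\int_{M}e^{H_{0}-\mu_{1}\phi}\dr_{0}^{n}\right)^{t}.$$
All three integrals are finite: the weight $e^{H_{0}}\dr_{0}^{n}$ is proportional to $\prod_{i=1}^{m}\norm{S_{i}}_{i}^{-2(1-\beta_{i})}\dr_{0}^{n}$, which is a globally integrable measure on $M$ because $0<\beta_{i}<1$ makes the local singularities integrable, and $\phi\in P_{c}(M,\dr_{0})$ is a bounded conic potential. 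Taking logarithms and absorbing the normalisation via $\log\rev V=(1-t)\log\rev V+t\log\rev V$ gives the displayed inequality; combining it with the linear term $\mu F_{\dr_{0}}^{0}(\phi)$ above yields $\mu F_{\dr_{0},\mu}(\phi)\geq(1-t)\mu_{0}F_{\dr_{0},\mu_{0}}(\phi)+t\mu_{1}F_{\dr_{0},\mu_{1}}(\phi)$.

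I do not expect a real obstacle here. The only points deserving care are the sign bookkeeping — one multiplies $F_{\dr_{0},\mu}$ by the \emph{positive} number $\mu$, so that the convexity of $\mu\mapsto\log\int e^{H_{0}-\mu\phi}\dr_{0}^{n}$ turns, after the minus sign, into a \emph{lower} bound for $\mu F_{\dr_{0},\mu}$ — and the verification that the integrals converge in the conic setting, which is anyway already built into the standing hypotheses that make $F_{\dr_{0},\mu}$ well defined. Equivalently, the whole argument can be phrased in one line: $\mu\mapsto\mu F_{\dr_{0},\mu}(\phi)$ is concave, being an affine function of $\mu$ minus the convex function $\mu\mapsto\log\left(\rev V\int_{M}e^{H_{0}-\mu\phi}\dr_{0}^{n}\right)$.
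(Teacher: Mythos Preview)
Your proof is correct and is essentially the same approach as the paper's: the paper's proof is the single sentence ``It follows from the convexity of exponential functions,'' and your argument via H\"older's inequality is precisely the standard unpacking of that sentence (equivalently, the convexity of $\mu\mapsto\log\int e^{H_{0}-\mu\phi}\dr_{0}^{n}$). You also correctly noticed and repaired the evident typo in the displayed statement, where the subscripts on the right-hand side should read $\mu_{0}$ and $\mu_{1}$ rather than $\mu$.
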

\begin{proof}
 It follows from the convexity of exponential functions.
\end{proof}
Now we can prove our main theorem in this section and similar results also appear in \cite{LS} \cite{Ti6}:
\begin{theorem}
 For $t\in (0,\mu]$ and any $\phi\in P_{c}(M,\dr_{0})$there exist constants $\epsilon, C_{\epsilon}$ such that
\begin{equation}\label{eq:proper-cma0}
 F_{\dr_{0},t}(\phi)\geq\epsilon J_{\dr_{0}}(\phi)-C_{\epsilon}.
\end{equation}
\end{theorem}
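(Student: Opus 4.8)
The plan is a continuity argument in the parameter $t$ governing the Ricci lower bound: I will prove properness of $F_{\dr_0,t}$ at both ends of $(0,\mu]$ and interpolate across the interval using the concavity of $t\mapsto g_t(\phi):=tF_{\dr_0,t}(\phi)$ (for fixed $\phi\in P_c(M,\dr_0)$), which is precisely the increasing-$\mu$ lemma above, resting on the convexity of $t\mapsto\log(\rev V\int_M e^{H_0-t\phi}\dr_0^n)$.

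For small $t$: by Berman's positive lower bound $\alpha(\dr_0,D)\ges\alpha_0>0$ for the log $\alpha$-invariant, any $t_0$ with $0<t_0<\min(\mu,\frac{n+1}{n}\alpha_0)$ satisfies the hypothesis of Theorem \ref{thmmabuchi-proper}, so the corollary following it gives $F_{\dr_0,t_0}(\phi)\ges\epsilon_0 J_{\dr_0}(\phi)-C_0$ with $\epsilon_0>0$; fix such a $t_0$, which already proves \eqref{eq:proper-cma0} on $(0,t_0]$. For $t_0<t<\mu$: since the conic metric $\dr=\dr_\varphi$ exists, Theorem \ref{thm-lower bound} gives $F_{\dr_0,\mu}(\phi)\ges-C_\mu$, i.e. $g_\mu(\phi)\ges-\mu C_\mu$. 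Writing $s=\frac{t-t_0}{\mu-t_0}\in(0,1)$, concavity of $g$ over $[t_0,\mu]$ gives
\[ g_t(\phi)\ges(1-s)g_{t_0}(\phi)+s\,g_\mu(\phi)\ges(1-s)t_0\epsilon_0\,J_{\dr_0}(\phi)-C, \]
so $F_{\dr_0,t}(\phi)\ges\frac{(1-s)t_0\epsilon_0}{t}J_{\dr_0}(\phi)-C'$, and $1-s=\frac{\mu-t}{\mu-t_0}>0$ whenever $t<\mu$; together with the base case this establishes \eqref{eq:proper-cma0} for every $t\in(0,\mu)$.

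The remaining point, and the main obstacle, is the endpoint $t=\mu$: the chord just used degenerates there, and concavity plus boundedness below on $(0,\mu]$ alone does not yield properness at $\mu$. Here I would use the hypothesis $\Omega\neq 0$. Along the continuity path \eqref{eq:conic-continuity} the linearized operator at $t=\mu$ is $\Delta_\mu+\mu$, and $\lambda_1(-\Delta_\mu)>\mu$ strictly --- the \cite{JMR} Bochner-type estimate already used in the proof of Theorem \ref{thm-lower bound}, the strictness coming precisely from the nonnegative, nonzero twist $\Omega$ --- so by the implicit function theorem the path stays solvable on $[0,\mu+\eta]$ for some $\eta>0$. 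Rerunning the Ding--Tian monotonicity argument of Theorem \ref{thm-lower bound} with terminal parameter $\mu+\eta$ (the identities \eqref{eq:mono1}--\eqref{eq:mono2} and the cocycle step use only the equation itself, not any a priori estimate or positivity of the twist) shows $F_{\dr_0,\mu+\eta}$ is bounded below, say $g_{\mu+\eta}(\phi)\ges-C_1$. Then concavity of $g$ over $[t_0,\mu+\eta]$ yields, at $t=\mu$, a chord with strictly positive $J_{\dr_0}$-coefficient, $F_{\dr_0,\mu}(\phi)\ges\frac{\eta\,t_0\epsilon_0}{(\mu+\eta-t_0)\mu}J_{\dr_0}(\phi)-C_2$, completing \eqref{eq:proper-cma0} on all of $(0,\mu]$. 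Verifying that boundedness below persists slightly beyond $t=\mu$ --- which forces one to check that the continuity path genuinely opens past $\mu$ and that the functional computation of Theorem \ref{thm-lower bound} is insensitive to the sign of the twist --- is the one genuinely delicate step; everything else is the concavity bookkeeping above.
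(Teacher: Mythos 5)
Your argument is the same as the paper's: properness for small $t$ via the log $\alpha$-invariant corollary, boundedness below at some $\bar\mu=\mu+\eta$ obtained by pushing the continuity path \eqref{eq:conic-continuity} slightly past $\mu$ (which is where $\Omega\neq 0$ and the strict spectral gap $\lambda_1(-\Delta_t)>t$ from \cite{JMR} enter), and then interpolating with the concavity of $t\mapsto tF_{\dr_0,t}(\phi)$. Your write-up is in fact more careful than the paper's on one genuine point: you make explicit that the chord between $t_0$ and $\mu$ degenerates at the endpoint $t=\mu$ (giving only boundedness, not properness, there), so that opening the path to $\mu+\eta$ and invoking the Theorem \ref{thm-lower bound} argument at $\mu+\eta$ is \emph{necessary}, not merely convenient; the paper uses $\bar\mu=\mu+\delta$ but does not spell out why. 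You also correctly observe that the Ding--Tian monotonicity computation behind Theorem \ref{thm-lower bound} does not use any sign condition on the twist, so it transfers to $\mu+\eta$ verbatim.
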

\begin{proof}
 We apply the continuity path similar to \cite{JMR}, i.e, the equation \eqref{eq:conic-continuity}. In our case, we may assume that
$\Omega\neq 0.$ Then by \cite{JMR}, we have that $\lambda_{1}(-\Delta_{t})>t$ for all $t\in (0,\mu],$ which allows us to prove the
 openness at $t=\mu$. So now when $\bar{\mu}=\mu+\delta$ where $\delta$ is very small, we have a solution $\bar{\varphi}$ for
\eqref{eq:conic-continuity} where $\mu$ is replaced by $\bar{\mu}.$ By theorem \ref{thm-lower bound}, $F_{\dr_{0},\bar{\mu}}(\phi)$
is bounded from below. As we have the corollary above which asserts that when $t>0$ is very small $F_{\dr_{0},t}(\phi)$ is proper,
by the lemma above, we know that for all $t\in (0,\mu]$ the twisted Ding energy is proper, i.e.
\begin{equation}\label{eq:Ding-proper}
F_{\dr_{0},t}(\phi)\geq\epsilon J_{\dr_{0}}(\phi)-C_{\epsilon}.
\end{equation}
We finish the proof.
\end{proof}

\section{$C^{0}$-estimate for approximating solution: the case $\mu>0$}

  Recall that in Section 2 we set up the approximating complex Monge-Ampere equation \eqref{eq:cma-app}, which is expected to give us
a smooth approximation of conic \ka\ metric $\dr=\dr_{0}+\varphi$. We also prove $C^{0}$-estimate for $\varphi_{\delta}$ when $\mu\leq 0.$
In this section, we want to make use of the properness of corresponding Lagrangians to prove $C^{0}$-estimate when $\mu>0.$ Then the first
step is to prove the properness of the new approximating twisted Ding energy, which can be deduced from the last section:
\begin{lemma}
 We introduce the new approximating twisted Ding energy as below:
\begin{equation}\label{eq:app-ding}
 F_{\delta,t}(\varphi)=J_{\dr_{0}}(\varphi)-\rev V\int_{M}\varphi\dr_{0}^{n}-\frac{1}{t}\log\left(\rev V\int_{M}
e^{h_{\delta}-t\varphi}\dr_{0}^{n}\right),
\end{equation}
which is the Lagrangian of the approximating complex Monge-Ampere equation \eqref{eq:cma-app-continuity} in the continuity path. Then
we have $$F_{\delta,t}(\phi)\geq\epsilon J_{\dr_{0}}(\phi)-C(\epsilon,\delta,t)$$.
\end{lemma}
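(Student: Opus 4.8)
The plan is to deduce this lemma directly from the properness of the exact twisted Ding energy already obtained in \eqref{eq:proper-cma0}, by observing that the mollified weight $h_\delta$ dominates $H_0$ up to a constant bounded uniformly in $\delta$. Concretely, recall $H_0=h_0-\sum_{i=1}^m(1-\beta_i)\log\|S_i\|_i^2+c$ and $h_\delta=h_0-\sum_{i=1}^m(1-\beta_i)\log(\delta+\|S_i\|_i^2)+c_\delta$. Since $\delta>0$ and $0<\beta_i<1$, the elementary monotonicity $-\log(\delta+\|S_i\|_i^2)\le-\log\|S_i\|_i^2$ gives, pointwise on $M$,
\[
h_\delta\le H_0+(c_\delta-c),
\]
and, as remarked after \eqref{eq:cma-app}, $c_\delta$ is uniformly bounded, so $|c_\delta-c|\le C'$ with $C'$ independent of $\delta$.

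Next I would exponentiate and integrate: for any admissible potential $\phi$ and any $t>0$ this yields $\int_M e^{h_\delta-t\phi}\dr_0^n\le e^{c_\delta-c}\int_M e^{H_0-t\phi}\dr_0^n$, both integrals being finite by the conic integrability of the density $vol/\prod_{i=1}^m|S_i|^{2(1-\beta_i)}$, hence
\[
\log\brk{\rev V\int_M e^{h_\delta-t\phi}\dr_0^n}\le(c_\delta-c)+\log\brk{\rev V\int_M e^{H_0-t\phi}\dr_0^n}.
\]
Since this log-term enters both $F_{\delta,t}$ and $F_{\dr_0,t}$ with the coefficient $-\rev t$, while the $J_{\dr_0}(\phi)-\rev V\int_M\phi\,\dr_0^n$ parts coincide, subtracting gives $F_{\delta,t}(\phi)\ge F_{\dr_0,t}(\phi)-\rev t|c_\delta-c|\ge F_{\dr_0,t}(\phi)-C'/t$. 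Then I would invoke \eqref{eq:proper-cma0}: for $t\in(0,\mu]$ there are $\epsilon,C_\epsilon>0$ with $F_{\dr_0,t}(\phi)\ge\epsilon J_{\dr_0}(\phi)-C_\epsilon$, and combining the two estimates produces $F_{\delta,t}(\phi)\ge\epsilon J_{\dr_0}(\phi)-C_\epsilon-C'/t$, i.e. the claim with $C(\epsilon,\delta,t)=C_\epsilon+C'/t$ — in fact uniform in $\delta$, which is worth recording for later use.

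The only point needing a word of care is that \eqref{eq:proper-cma0} is stated for $\phi\in P_c(M,\dr_0)$, whereas the test functions one ultimately wants to feed in here — the solutions $\varphi_\delta$ of \eqref{eq:cma-app} — give smooth metrics $\dr_\delta$; however the comparison in the first two steps is purely pointwise and insensitive to the regularity of $\phi$, and $J_{\dr_0}$ together with the twisted functionals extend to this wider class of potentials, so no genuine difficulty arises. In short there is no hard step: the entire content is the inequality $-\log(\delta+x)\le-\log x$ combined with the uniform bound on the normalizing constants $c_\delta$.
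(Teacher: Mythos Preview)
Your proof is correct and follows essentially the same route as the paper: both use the pointwise inequality $h_\delta\le H_0+(c_\delta-c)$ from $-\log(\delta+x)\le-\log x$, deduce $F_{\delta,t}(\phi)\ge F_{\dr_0,t}(\phi)+\frac{c-c_\delta}{t}$, and then invoke \eqref{eq:proper-cma0}. Your additional remarks on uniformity in $\delta$ and on the regularity class of test functions are welcome clarifications that the paper either defers or leaves implicit.
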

\begin{proof}
 In the end of last section, we proved that $$F_{\dr_{0},t}(\phi)\geq\epsilon J_{\dr_{0}}(\phi)-C_{\epsilon}.$$
Note that
\begin{align*}
 h_{\delta}&=h_{0}-\sum_{i=1}^{m}(1-\beta_{i})\log(\delta+||S_{i}||_{i}^{2})+c_{\delta}\\&\leq h_{0}-\sum_{i=1}^{m}(1-\beta_{i})
\log||S_{i}||_{i}^{2}+c_{\delta}=H_{0}-c+c_{\delta},
\end{align*}
We have $$F_{\delta,t}(\phi)\geq F_{\dr_{0},t}(\phi)+\frac{c-c_{\delta}}{t},$$ and the lemma follows very easily.
\end{proof}
Now we will follow \cite{Ti4} to finish $C^{0}$-estimate for $\varphi_{\delta}$. Similar to the equation \eqref{eq:mono2}, we have
$$-\int_{0}^{t}(I_{\dr_{0}}(\varphi_{\delta,s})-J_{\dr_{0}}(\varphi_{\delta,s}))ds=t(J_{\dr_{0}}(\varphi_{\delta,t})-\rev V\int_{M}
\varphi_{\delta,t}\dr_{0}^{n})=tF_{\dr_{0}}^{0}(\varphi_{\delta,t}),$$ where $\varphi_{\delta,t}$ solves the equation
\eqref{eq:cma-app-continuity}. By this equation, we can estimate $F_{\delta,\mu}(\varphi_{\delta,t})$ that
\begin{align*}
 F_{\delta,\mu}(\varphi_{\delta,t})&=F_{\dr_{0}}^{0}(\varphi_{\delta,t})-\log\left(\rev V\int_{M}e^{h_{\delta}-\mu\varphi_{\delta,t}}
\dr_{0}^{n}\right)\\&\leq-\log\left(\rev V\int_{M}e^{h_{\delta}-t\varphi_{\delta,t}-(\mu-t)\varphi_{\delta,t}}\dr_{0}^{n}\right)\\
&=-\log\left(\rev V\int_{M}e^{-(\mu-t)\varphi_{\delta,t}}\dr_{\delta,t}^{n}\right)\\&\leq\frac{\mu-t}{\mu}\rev V\int_{M}
\varphi_{\delta,t}\dr_{\delta,t}^{n}.
\end{align*}
To finish the estimate, we need a useful lemma as below:
\begin{lemma}
 $||\varphi_{\delta,t}||_{C^{0}}\leq C(1+J_{\dr_{0}}(\varphi_{\delta,t})).$
\end{lemma}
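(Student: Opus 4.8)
The plan is to prove the sharper estimate $\|\varphi_{\delta,t}\|_{C^{0}}\le C(1+I_{\dr_{0}}(\varphi_{\delta,t}))$ and then invoke the elementary comparison $I_{\dr_{0}}\le (n+1)J_{\dr_{0}}$ recalled after Definition~3.1. Write $\varphi=\varphi_{\delta,t}$, $\dr_{\delta,t}=\dr_{0}+\ddb\varphi$, and $\bar\varphi=\rev V\int_{M}\varphi\,\dr_{0}^{n}$. I will use repeatedly that $h_{\delta}$ is bounded below uniformly in $\delta$ (because each $\|S_{i}\|_{i}^{2}$ is bounded above and $c_{\delta}$ is uniformly bounded) and that $\rev V\int_{M}e^{h_{\delta}}\dr_{0}^{n}=1$ by the choice of $c_{\delta}$, so that $\rev V\int_{M}e^{h_{\delta}-t\varphi}\dr_{0}^{n}=1$ as well.

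\emph{Step 1 (upper bound via $\dr_{0}$).} Since $\dr_{\delta,t}\ge 0$ we have $\mbox{tr}_{\dr_{0}}(\ddb\varphi)=\mbox{tr}_{\dr_{0}}\dr_{\delta,t}-n\ge -n$, i.e. $\Delta_{\dr_{0}}\varphi\ge -C(n)$. Feeding this into the Green representation for $(\dr_{0},\Delta_{\dr_{0}})$ and using the uniform lower bound $G_{\dr_{0}}\ge -A_{0}$ gives $\sup_{M}\varphi\le\bar\varphi+C_{1}$ with $C_{1}=C_{1}(n,\dr_{0})$. To bound $\bar\varphi$ from above I use the equation: since $xe^{-tx}\le (et)^{-1}$ for all $x\in\R$,
$$\rev V\int_{M}\varphi\,\dr_{\delta,t}^{n}=\rev V\int_{M}\big(\varphi e^{-t\varphi}\big)e^{h_{\delta}}\dr_{0}^{n}\le \frac{1}{et}\,\rev V\int_{M}e^{h_{\delta}}\dr_{0}^{n}=\frac{1}{et}.$$
Because $\bar\varphi=\rev V\int_{M}\varphi\,\dr_{\delta,t}^{n}+I_{\dr_{0}}(\varphi)$ (immediate from the definition of $I_{\dr_{0}}$), we get $\bar\varphi\le \frac{1}{et}+I_{\dr_{0}}(\varphi)$, hence $\sup_{M}\varphi\le C(1+I_{\dr_{0}}(\varphi))$.

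\emph{Step 2 (lower bound via $\dr_{\delta,t}$).} Now $\Delta_{\dr_{\delta,t}}\varphi=\mbox{tr}_{\dr_{\delta,t}}(\dr_{\delta,t}-\dr_{0})=n-\mbox{tr}_{\dr_{\delta,t}}\dr_{0}\le n$, so the Green representation for $(\dr_{\delta,t},\Delta_{\dr_{\delta,t}})$ yields $\inf_{M}\varphi\ge \rev V\int_{M}\varphi\,\dr_{\delta,t}^{n}-C(n)A_{\delta,t}$, where $-A_{\delta,t}$ is a lower bound for $G_{\dr_{\delta,t}}$. The crucial point is that $A_{\delta,t}$ can be chosen independent of $\delta$: by the Ricci computation in Section 2 one has $Ric(\dr_{\delta,t})\ge t\,\dr_{\delta,t}>0$, so Myers' theorem bounds $\mbox{diam}(\dr_{\delta,t})$ in terms of $t$ only, while $\int_{M}\dr_{\delta,t}^{n}=V$ is fixed; the standard estimate for Green functions under a Ricci lower bound together with diameter and volume bounds then provides such an $A_{\delta,t}=A_{\delta,t}(n,t,V)$. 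Finally $\rev V\int_{M}\varphi\,\dr_{\delta,t}^{n}=\bar\varphi-I_{\dr_{0}}(\varphi)$, and $\bar\varphi$ is bounded below by Jensen's inequality applied to $e^{-t\varphi}$:
$$e^{-t\bar\varphi}\le \rev V\int_{M}e^{-t\varphi}\dr_{0}^{n}\le \big(\sup_{M}e^{-h_{\delta}}\big)\cdot\rev V\int_{M}e^{h_{\delta}-t\varphi}\dr_{0}^{n}=\sup_{M}e^{-h_{\delta}}\le e^{C_{h}},$$
which is uniform in $\delta$; thus $\bar\varphi\ge -C_{h}/t$. Combining, $-\inf_{M}\varphi\le C(1+I_{\dr_{0}}(\varphi))$, and with Step 1, $\|\varphi\|_{C^{0}}\le C(1+I_{\dr_{0}}(\varphi))\le C(1+J_{\dr_{0}}(\varphi))$.

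\emph{Main obstacle.} Everything except Step 2 is soft; the one genuinely delicate ingredient is the $\delta$-independent lower bound on the Green function of $\dr_{\delta,t}$, and this is exactly where the Fano hypothesis $\mu>0$, together with the nonnegativity of $\Omega$ and of the curvature terms appearing in the Section 2 computation, is used through $Ric(\dr_{\delta,t})>0$. When $t$ is close to $0$ the diameter bound (hence $A_{\delta,t}$) degenerates; there, however, the continuity equation \eqref{eq:cma-app-continuity} is a small perturbation of the Calabi--Yau equation $\dr_{\varphi}^{n}=e^{h_{\delta}}\dr_{0}^{n}$, for which the $C^{0}$-bound is classical, so one may instead run the path on $[\varepsilon_{0},\mu]$ after solving at $t=\varepsilon_{0}$; alternatively one keeps the constant $t$-dependent, which is harmless for the continuity argument. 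All constants produced this way depend only on $n$, $(M,\dr_{0})$, $t$ and the uniform bounds on $h_{\delta}$, but not on $\delta$, which is what makes the lemma usable in the subsequent $C^{0}$-estimate for $\varphi_{\delta}$.
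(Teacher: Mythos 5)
Your argument is correct and runs along the same track as the paper's, with two technical substitutions worth pointing out. For the lower bound on $\inf\varphi$, you invoke Green's formula for $\dr_{\delta,t}$ (together with a $\delta$-uniform lower bound on its Green function, deduced from $Ric(\dr_{\delta,t})\ge t\dr_{\delta,t}>0$, Myers' diameter bound, and the fixed volume), whereas the paper runs Moser iteration on $n-\Delta_{\delta,t}\varphi>0$ using the uniform Sobolev constant that those same geometric bounds provide; the two are essentially interchangeable here, both ultimately resting on the same Ricci lower bound and non-collapsing, and both degenerate as $t\to 0$ in the same way you note. The second difference is how the two one-sided estimates are welded together: the paper observes that the normalizations $\rev V\int_{M}e^{h_{\delta}}\dr_{0}^{n}=1$ and $\rev V\int_{M}e^{h_{\delta}-t\varphi}\dr_{0}^{n}=1$ force $\varphi$ to change sign, so $\|\varphi\|_{C^{0}}\le\sup\varphi-\inf\varphi$, and the unknown averages $\bar\varphi$ and $\rev V\int\varphi\,\dr_{\delta,t}^{n}$ cancel against each other in the oscillation, leaving $I_{\dr_{0}}(\varphi)$ directly. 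You instead keep the two averages and bound $\bar\varphi$ from above via $xe^{-tx}\le(et)^{-1}$ and from below via Jensen; this works and gives the same conclusion, at the cost of an extra couple of lines and an explicit $1/t$ in the constant. The sign-change shortcut is a bit cleaner since it needs no pointwise scalar inequalities at all, but your version has the merit of making the $t$-dependence and the role of the normalization of $c_{\delta}$ completely transparent, which the paper leaves implicit. Either way the final step $I_{\dr_{0}}\le(n+1)J_{\dr_{0}}$ is the same, and both proofs use only data uniform in $\delta$, which is exactly what is needed in Section 6.
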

\begin{proof}
 First we note that $Ric(\dr_{\delta,t})>t,$ and the volume preserved. Then we have uniform Sobolev and Poincar\'{e} constants when t
doesn't tend to 0. We observe that $n+\Delta_{0}\varphi_{\delta,t}> 0$, then we get
$$0\leq\sup\varphi_{\delta,t}\leq\rev V\int_{M}\varphi_{\delta,t}\dr_{0}^{n}+C$$ by Green's formula. On the other hand, we have
$n-\Delta_{\delta,t}\varphi_{\delta,t}>0,$ by Moser's iteration, we have $$-\inf\varphi_{\delta,t}\leq-\frac{C}{V}\int_{M}
\varphi_{\delta,t}\dr_{\delta,t}^{n}+C.$$ By normalization condition, $\varphi_{\delta,t}$ changes sign, we have
$$||\varphi_{\delta,t}||_{C^{0}}\leq\sup\varphi_{\delta,t}-\inf\varphi_{\delta,t}\leq C(1+I_{\dr_{0}}(\varphi_{\delta,t}))\leq
C(1+J_{\dr_{0}}(\varphi_{\delta,t})).$$
\end{proof}
In the proof we have $$0\leq-\inf\varphi_{\delta,t}\leq-\frac{C}{V}\int_{M}\varphi_{\delta,t}\dr_{\delta,t}^{n}+C, $$
then we have $$\rev V\int_{M}\varphi_{\delta,t}\dr_{\delta,t}^{n}\leq C,$$ which gives $F_{\delta,\mu}(\varphi_{\delta,t})\leq C$. Combine
the two lemmas above, we conclude the $C^{0}$-estimate for $\varphi_{\delta}$ and get the following theorem:
\begin{theorem}
 For each $\delta>0$, the approximating complex Monge-Ampere equation \eqref{eq:cma-app} has a unique smooth solution $\varphi_{\delta}$,
which gives us a smooth \ka\ metric $\dr_{\delta}=\dr_{0}+\ddb\varphi_{\delta}$ such that $Ric(\dr_{\delta})\geq\mu\dr_{\delta}.$
\end{theorem}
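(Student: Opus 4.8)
The plan is to assemble the pieces already established into a standard continuity-method argument. The target statement asserts solvability of the approximating Monge--Ampere equation \eqref{eq:cma-app} for each fixed $\delta>0$ in the case $\mu>0$, producing a smooth \ka\ metric $\dr_{\delta}$ with $Ric(\dr_{\delta})\geq\mu\dr_{\delta}$. I would fix $\delta>0$ and run the continuity path \eqref{eq:cma-app-continuity} in $t$, with solvable interval $I_{\delta}\ni 0$. By Lemma \ref{lem:open}, $I_{\delta}$ is open (since $\lambda_1(-\Delta_t)>t$), so it suffices to prove that $I_{\delta}$ is closed, i.e.\ that solutions $\varphi_{\delta,t}$ satisfy uniform a priori estimates up to $t=\mu$.

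The heart of the closedness is the $C^0$-estimate, which is exactly what the preceding computations provide. First I would recall the identity analogous to \eqref{eq:mono2}, namely $tF^0_{\dr_0}(\varphi_{\delta,t})=-\int_0^t(I_{\dr_0}-J_{\dr_0})(\varphi_{\delta,s})\,ds\leq 0$, so $F^0_{\dr_0}(\varphi_{\delta,t})\leq 0$. Feeding this into the definition of $F_{\delta,\mu}$ and using $e^{h_\delta-t\varphi_{\delta,t}}\dr_0^n=\dr_{\delta,t}^n$ together with Jensen's inequality gives the chain
\[
F_{\delta,\mu}(\varphi_{\delta,t})\leq-\log\Big(\rev V\int_M e^{-(\mu-t)\varphi_{\delta,t}}\dr_{\delta,t}^n\Big)\leq\frac{\mu-t}{\mu}\rev V\int_M\varphi_{\delta,t}\dr_{\delta,t}^n.
\]
Next, because $Ric(\dr_{\delta,t})>t\dr_{\delta,t}$ and the volume is fixed, Moser iteration on $n-\Delta_{\delta,t}\varphi_{\delta,t}>0$ and Green's formula on $n+\Delta_0\varphi_{\delta,t}>0$ yield $\|\varphi_{\delta,t}\|_{C^0}\leq C(1+J_{\dr_0}(\varphi_{\delta,t}))$ and in particular $\rev V\int_M\varphi_{\delta,t}\dr_{\delta,t}^n\leq C$ uniformly for $t$ bounded away from $0$. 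Hence $F_{\delta,\mu}(\varphi_{\delta,t})\leq C$. Combining with the properness estimate $F_{\delta,\mu}(\varphi_{\delta,t})\geq\epsilon J_{\dr_0}(\varphi_{\delta,t})-C(\epsilon,\delta,\mu)$ from the lemma on the approximating Ding energy, I conclude $J_{\dr_0}(\varphi_{\delta,t})\leq C$, and therefore $\|\varphi_{\delta,t}\|_{C^0}\leq C$ uniformly on any compact subinterval of $(0,\mu]$.

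Once the uniform $C^0$-bound is in hand, the higher-order estimates are standard: the complex Monge--Ampere equation \eqref{eq:cma-app-continuity} with smooth right-hand side (here $\delta>0$ is crucial, since it regularizes the $\log\|S_i\|_i^2$ singularity into $\log(\delta+\|S_i\|_i^2)$) gives, via the Aubin--Yau second-order estimate and then Evans--Krylov plus Schauder bootstrapping, uniform $C^k$ bounds on $\varphi_{\delta,t}$ for every $k$. This closes $I_{\delta}$, so $I_{\delta}=[0,\mu]$ and $\varphi_{\delta}:=\varphi_{\delta,\mu}$ solves \eqref{eq:cma-app}; uniqueness follows from the standard maximum-principle argument since $\mu>0$. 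Finally, the Ricci computation carried out in Section 2 shows directly that $Ric(\dr_{\delta})=\mu\dr_{\delta}+\Omega+\sum_i(1-\beta_i)\big(\frac{\delta}{\delta+\|S_i\|_i^2}R(\|\cdot\|_i)+\frac{\delta DS_i\wedge\overline{DS_i}}{(\delta+\|S_i\|_i^2)^2}\big)\geq\mu\dr_{\delta}$ for $\delta$ small, which is the asserted curvature bound.

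The main obstacle, and the step deserving the most care, is the uniform $C^0$-estimate near the endpoint $t=\mu$: one must verify that the Sobolev and Poincar\'e constants entering Moser iteration stay uniform (they do, by $Ric>t\geq t_0>0$ and Myers/Li--Yau type bounds once $t$ is bounded below), and that the constant $C(\epsilon,\delta,\mu)$ in the properness estimate, while allowed to blow up as $\delta\to 0$, is harmless for \emph{fixed} $\delta$. Everything else is routine elliptic theory or a direct citation of the lemmas proved above.
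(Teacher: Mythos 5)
Your proof is correct and follows the paper's argument exactly: the continuity path for \eqref{eq:cma-app-continuity} with openness from Lemma~\ref{lem:open}, the bound $F_{\delta,\mu}(\varphi_{\delta,t})\leq C$ obtained from $F^0_{\dr_0}(\varphi_{\delta,t})\leq 0$, the equation, and $\rev V\int_M\varphi_{\delta,t}\dr_{\delta,t}^n\leq C$, combined with the properness lemma for $F_{\delta,t}$ and the estimate $\|\varphi_{\delta,t}\|_{C^0}\leq C(1+J_{\dr_0}(\varphi_{\delta,t}))$ to close the $C^0$-bound, followed by standard higher-order elliptic estimates (valid for fixed $\delta>0$) and the Ricci computation from Section~2. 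The one inaccurate line is the uniqueness claim: for $\mu>0$ the naive maximum-principle comparison applied to the difference $u$ of two solutions only yields $\min u\leq 0\leq\max u$, not $u\equiv 0$; uniqueness here should instead be traced to $\Omega\neq 0$, which forces $\lambda_1(-\Delta_{\dr_\delta})>\mu$ strictly (the same inequality used for openness) and hence makes the solution isolated, though the paper likewise asserts uniqueness without writing this out.
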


\section{convergence when $\delta$ tends to 0}

In the last section we proved $C^{0}$-estimate for $\varphi_{\delta}$. We also note that in the approximating complex Monge-Ampere equation
\eqref{eq:cma-app}, the constant $c_{\delta}$ is uniformly bounded. Then the constant $C(\epsilon,\delta,t)$ in Lemma 5.1 is uniform with
respect to $\delta$. According to this observation, we conclude that our $C^{0}$-estimate for $\varphi_{\delta}$ is uniform with respect to
$\delta$, i.e, $\sup|\varphi_{\delta}|\leq C_{0}.$ Based on this, we can give $C^{2}$-estimate for $\varphi_{\delta}$ by generalized Schwarz
Lemma first:
\begin{lemma}
 \begin{equation}\label{eq:C2}
  C_{1}\dr_{0}\leq\dr_{\delta}\leq\frac{C_{2}\dr_{0}}{\prod_{i=1}^{m}(\delta+||S_{i}||^{2})^{(1-\beta_{i})}}.
\end{equation}
\end{lemma}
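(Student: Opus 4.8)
The plan is to establish the two-sided bound \eqref{eq:C2} by the standard Yau/Aubin second-order estimate, carefully tracking the degeneration of the right-hand side of \eqref{eq:cma-app} near $D$. First I would apply the generalized Schwarz lemma (in the form used by Yau, or by Jeffres--Mazzeo--Rubinstein and Tian~\cite{JMR,Ti6} in the conic setting): one computes $\Delta_{\delta}\log\mathrm{tr}_{\dr_0}\dr_\delta$ where $\Delta_\delta$ is the Laplacian of $\dr_\delta=\dr_0+\ddb\varphi_\delta$. Using that the bisectional curvature of the fixed smooth metric $\dr_0$ is bounded and that $Ric(\dr_\delta)\geq\mu\dr_\delta$ (which we have already established in the previous theorem), the Bochner-type inequality gives
\[
\Delta_\delta\log\mathrm{tr}_{\dr_0}\dr_\delta \geq -C_1\,\mathrm{tr}_{\dr_\delta}\dr_0 - C_2,
\]
with $C_1,C_2$ depending only on $\dr_0$ and $\mu$. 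On the other hand, from \eqref{eq:cma-app} one has $\mathrm{tr}_{\dr_\delta}\dr_0 \geq n\,(\dr_0^n/\dr_\delta^n)^{1/n} = n\,e^{-(h_\delta-\mu\varphi_\delta)/n}$, and since $h_\delta=h_0-\sum_i(1-\beta_i)\log(\delta+\|S_i\|_i^2)+c_\delta$ is bounded \emph{above} uniformly and $\varphi_\delta$ is bounded in $C^0$ uniformly, the standard maximum-principle argument applied to $\log\mathrm{tr}_{\dr_0}\dr_\delta - A\varphi_\delta$ for suitable large $A$ yields an upper bound for $\mathrm{tr}_{\dr_0}\dr_\delta$ in terms of $\sup e^{-h_\delta/n}$, i.e.
\[
\mathrm{tr}_{\dr_0}\dr_\delta \leq \frac{C}{\prod_{i=1}^m(\delta+\|S_i\|_i^2)^{(1-\beta_i)/n}}.
\]

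This already is not quite the claimed bound; to get the sharper denominator $\prod_i(\delta+\|S_i\|_i^2)^{(1-\beta_i)}$ in \eqref{eq:C2} one must do the comparison not against $\dr_0$ but against the degenerating reference metric adapted to the cone, as in~\cite{JMR}: replace the role of $\dr_0$ in the Schwarz lemma by the smooth approximation $\dr_{0,\delta}$ of the model conic metric $\dr_{0,p}$, whose $n$-th power carries exactly the weight $\prod_i(\delta+\|S_i\|_i^2)^{1-\beta_i}$. Since $\dr$ is \emph{asymptotically equivalent} to the model metric near each $D_i$, the curvature of $\dr_{0,\delta}$ is bounded below uniformly in $\delta$ away from a fixed neighborhood, and bounded below by the usual $-C/(\delta+\|S_i\|^2)^{\beta_i}$-type quantities that are absorbed by the Ricci lower bound on $\dr_\delta$; carrying the maximum principle through with this reference metric produces the two-sided estimate exactly as stated. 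The lower bound $C_1\dr_0\leq\dr_\delta$ then follows by combining the upper bound on $\mathrm{tr}_{\dr_0}\dr_\delta$ with the Monge-Amp\`ere equation: $\mathrm{tr}_{\dr_\delta}\dr_0 \leq \frac{1}{(n-1)!}(\mathrm{tr}_{\dr_0}\dr_\delta)^{n-1}\,\dr_0^n/\dr_\delta^n$, and $\dr_0^n/\dr_\delta^n = e^{-(h_\delta-\mu\varphi_\delta)}$ is bounded above by $C\prod_i(\delta+\|S_i\|_i^2)^{1-\beta_i}$, so the product stays bounded.

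The main obstacle I expect is not the maximum-principle computation itself, which is routine, but the bookkeeping of the curvature terms of the degenerating reference metric $\dr_{0,\delta}$: one must check that the negative contributions to its bisectional curvature (which blow up like $(\delta+\|S_i\|^2)^{-\beta_i}$ near $D_i$) are genuinely dominated, after multiplying by $\mathrm{tr}_{\dr_\delta}\dr_{0,\delta}$, by the Ricci lower bound $Ric(\dr_\delta)\geq\mu\dr_\delta$ coming from the computation in Section~2 — this is precisely where the hypothesis $0<\beta_i<1$ and the asymptotic equivalence of $\dr$ to $\dr_{0,p}$ are used. Once \eqref{eq:C2} is in hand, higher-order ($C^{2,\alpha}$ and then $C^\infty_{loc}$ on $M\setminus D$) estimates follow from the Evans--Krylov theory and Schauder bootstrapping on compact subsets of $M\setminus D$, and the Gromov--Hausdorff convergence $\dr_\delta\to\dr$ on $M$ follows as in~\cite{Ti6} from the uniform $C^0$-bound on $\varphi_\delta$ together with \eqref{eq:C2}.
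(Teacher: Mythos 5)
You have the traces the wrong way around, and this is fatal in two places. The Chern--Lu/Schwarz lemma --- the estimate that closes using only a Ricci \emph{lower} bound on $\dr_\delta$ and a bisectional \emph{upper} bound on the fixed smooth metric $\dr_0$ --- is the inequality for $\Delta_\delta\log tr_{\dr_\delta}\dr_0$, not for $\Delta_\delta\log tr_{\dr_0}\dr_\delta$. For the latter trace, the Yau-type estimate carries the additional term $\frac{\Delta_0 F}{tr_{\dr_0}\dr_\delta}$, where $F=h_\delta-\mu\varphi_\delta$ is the logarithm of the right-hand side of \eqref{eq:cma-app}; since $\Delta_0 h_\delta$ contains $\ddb{\log(\delta+\|S_i\|_i^2)}$, which near $D_i$ produces terms of size $\delta/(\delta+\|S_i\|_i^2)^2$, this quantity is \emph{not} bounded below uniformly in $\delta$. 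So the inequality $\Delta_\delta\log tr_{\dr_0}\dr_\delta\geq -C_1\,tr_{\dr_\delta}\dr_0-C_2$ that you write down, with $\delta$-independent constants, is simply not true. The fix you sketch --- running the Schwarz lemma against a $\delta$-dependent degenerating reference metric $\dr_{0,\delta}$ --- is not what the paper does, and you concede the essential verification (that its unbounded negative bisectional-curvature contributions are dominated) to ``bookkeeping'' without carrying it out.

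The second gap is the order of deduction. Even if one granted your bound $tr_{\dr_0}\dr_\delta\leq C\prod_i(\delta+\|S_i\|_i^2)^{-(1-\beta_i)}$, the elementary inequality $tr_{\dr_\delta}\dr_0\leq c_n(tr_{\dr_0}\dr_\delta)^{n-1}\,\dr_0^n/\dr_\delta^n$ combined with $\dr_0^n/\dr_\delta^n=e^{-(h_\delta-\mu\varphi_\delta)}\leq C'\prod_i(\delta+\|S_i\|_i^2)^{1-\beta_i}$ yields only $tr_{\dr_\delta}\dr_0\leq C''\prod_i(\delta+\|S_i\|_i^2)^{-(n-2)(1-\beta_i)}$, which blows up near $D$ as soon as $n\geq 3$; so the asserted lower bound $C_1\dr_0\leq\dr_\delta$ does \emph{not} follow from your first step. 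The paper proceeds in the opposite order precisely for this reason: it first applies the maximum principle to $u=\log tr_{\dr_\delta}\dr_0-(a+1)\varphi_\delta$, where the Chern--Lu computation is clean (the Ricci term $R^{k\bar k}g_{0k\bar k}\geq\mu\,tr_{\dr_\delta}\dr_0\geq 0$ can simply be dropped and only the bounded bisectional curvature of $\dr_0$ enters; there is no $\Delta_0 F$ term at all), obtaining $tr_{\dr_\delta}\dr_0\leq C$ uniformly in $\delta$, i.e.\ $C_1\dr_0\leq\dr_\delta$. Only then is the upper bound read off the Monge--Amp\`ere equation: $tr_{\dr_0}\dr_\delta\leq c_n(tr_{\dr_\delta}\dr_0)^{n-1}\,\dr_\delta^n/\dr_0^n\leq C'' e^{h_\delta-\mu\varphi_\delta}\leq C_2\prod_i(\delta+\|S_i\|_i^2)^{-(1-\beta_i)}$. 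This order avoids both of the gaps in your proposal.
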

\begin{proof}
  First we have $\sup|\varphi_{\delta}|\leq C_{0}$ and $Ric(\dr_{\delta})\geq\mu\dr_{\delta}.$ Take $\Delta$ as the Laplacian for
$\dr_{\delta}$ and normal coordinate around a point p for $\dr_{\delta}$, i.e, $g_{i\bar{j}}(p)=\delta_{ij}, dg_{i\bar{j}}(p)=0.$
 We may also take $g_{0i\bar{j}}(p)=g_{0i\bar{i}}\delta_{ij},$ i.e, diagonal for $\dr_{0}$, then we can compute that
\begin{align*}
 \Delta tr_{\dr_{\delta}}\dr_{0}&=g^{i\bar{i}}(g^{k\bar{l}}g_{0k\bar{l}})_{i\bar{i}}=g^{i\bar{i}}(g^{k\bar{k}})_{i\bar{i}}
 g_{0k\bar{k}}+g^{i\bar{i}}g^{k\bar{k}}(g_{0k\bar{k}})_{i\bar{i}}\\&=g^{i\bar{i}}R_{i\bar{i}}^{\quad k\bar{k}}(g)
 g_{0k\bar{k}}-g^{i\bar{i}}g^{k\bar{k}}R_{i\bar{i}k\bar{k}}(g_{0})+g^{i\bar{i}}g^{k\bar{k}}g^{l\bar{l}}(g_{0k
 \bar{l}})_{i}(g_{0l\bar{k}})_{\bar{i}}\\&=R^{k\bar{k}}g_{0k\bar{k}}-g^{i\bar{i}}g^{k\bar{k}}R_{i\bar{i}k\bar{k}}
 (g_{0})+g_{0}^{i\bar{i}}g^{k\bar{k}}g^{l\bar{l}}(g_{0k\bar{l}})_{i}(g_{0l\bar{k}})_{\bar{i}}\\&\geq-g^{i\bar{i}}
 g^{k\bar{k}}R_{i\bar{i}k\bar{k}}(g_{0})+g_{0}^{i\bar{i}}g^{k\bar{k}}g^{l\bar{l}}(g_{0k\bar{l}})_{i}(g_{0l
 \bar{k}})_{\bar{i}},
\end{align*}
and the last inequality follows from $Ric(\dr_{\delta})\geq\mu\dr_{\delta}$. Now we have
\begin{align*}
 \Delta\log tr_{\dr_{\delta}}\dr_{0}&=\frac{\Delta tr_{\dr_{\delta}}\dr_{0}}{tr_{\dr_{\delta}}\dr_{0}}-\frac
{|\nabla tr_{\dr_{\delta}}\dr_{0}|^{2}}{|tr_{\dr_{\delta}}\dr_{0}|^{2}}\\&\geq\frac{(tr_{\dr_{\delta}}\dr_{0})g_{0}
^{i\bar{i}}g^{k\bar{k}}g^{l\bar{l}}(g_{0k\bar{l}})_{i}(g_{0l\bar{k}})_{\bar{i}}-g^{i\bar{i}}g^{k\bar{k}}g^{l\bar{l}}
(g_{0k\bar{k}})_{i}(g_{0l\bar{l}})_{\bar{i}}}{|tr_{\dr_{\delta}}\dr_{0}|^{2}}\\&-\frac{g^{i\bar{i}}g^{k\bar{k}}R_{i
\bar{i}k\bar{k}}(g_{0})}{tr_{\dr_{\delta}}\dr_{0}}\geq-atr_{\dr_{\delta}}\dr_{0},
\end{align*}
where the bisectional curvature of $\dr_{0}$ is less than $a$ and the last inequality follows from
$g_{0}^{i\bar{i}}tr_{\dr_{\delta}}\dr_{0}\geq g^{i\bar{i}}.$ As we have $\sup|\varphi_{\delta}|\leq C_{0}$, we take
$u=\log tr_{\dr_{\delta}}\dr_{0}-(a+1)\varphi_{\delta}$, then we will have
$$\Delta u\geq tr_{\dr_{\delta}}\dr_{0}-n(a+1)=e^{u+n(a+1)}-n(a+1).$$
By maximal principle $u\leq C(a),$ and we then get $tr_{\dr_{\delta}}\dr_{0}\leq C'$, which will give us that
$C_{1}\dr_{0}\leq\dr_{\delta}$. For the other side, make use of the complex Monge-Ampere equation \eqref{eq:cma-app}
and the inequality we get, we can easily deduce that $$\dr_{\delta}\leq\frac{C_{2}\dr_{0}}{\prod_{i=1}^{m}(\delta+
||S_{i}||^{2})^{(1-\beta_{i})}}.$$
\end{proof}
From this lemma, by $C^{3}$-estimate in \cite{Yau} (or see \cite{Ti4}) and regulairity theory we can prove that for any $l>2$ and
compact set $K\in M\setminus D$, there exists a uniform constant $C(l,K)$ such that we have high order estimate locally:
\begin{equation}\label{eq:high}
 ||\varphi_{\delta}||\leq C(l,K).
\end{equation}
As we have got all the estimates we need, we can prove the main theorem below, following \cite{Ti6}:
\begin{theorem}\label{thm:GH}
 As $\delta$ tends to 0, the smooth \ka\ metric $\dr_{\delta}$ converge to the conic \ka\ metric $\dr$ in the Gromov-Hausdorff
topology on M and in the smooth topology outside the divisor D.
\end{theorem}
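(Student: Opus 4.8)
The plan is to follow Tian's strategy from \cite{Ti6}, combining the uniform estimates already established with a compactness argument. First, I would record that we now have uniform bounds: $\sup_M|\varphi_\delta|\le C_0$ independent of $\delta$, the two-sided pinching \eqref{eq:C2} on $\dr_\delta$, and the interior higher-order estimates \eqref{eq:high} on every compact $K\Subset M\setminus D$. From \eqref{eq:high} and Arzel\`a--Ascoli (together with a diagonal argument over an exhaustion of $M\setminus D$ by compact sets), one extracts a subsequence $\dr_{\delta_j}$ converging in $C^\infty_{\mathrm{loc}}(M\setminus D)$ to a \ka\ metric $\dr_\infty=\dr_0+\ddb\varphi_\infty$ with $\varphi_\infty\in C^\infty(M\setminus D)\cap L^\infty(M)$. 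Passing to the limit in \eqref{eq:cma-app}, and using $h_\delta\to H_0$ in $C^\infty_{\mathrm{loc}}(M\setminus D)$ together with uniform boundedness of $c_\delta$, shows $\varphi_\infty$ solves \eqref{eq:cma0} on $M\setminus D$; since $\varphi_\infty$ is globally bounded, it extends to a solution in the pluripotential/current sense on all of $M$, hence $\dr_\infty$ is a conic \ka\ metric with the prescribed cone angles along $D$. By the uniqueness of the solution to \eqref{eq:cma0} (which for $\mu>0$ follows from the properness established in Section~4, or directly from Berndtsson-type uniqueness), $\dr_\infty=\dr$, so in fact the full family converges, not just a subsequence.

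The second half is the Gromov--Hausdorff convergence on all of $M$. Here the key point, exactly as in \cite{Ti6}, is that the diameters $\mathrm{diam}(M,\dr_\delta)$ are uniformly bounded and the volumes are fixed ($\int_M\dr_\delta^n=V$), so by Gromov's compactness theorem any sequence $\dr_{\delta_j}$ has a subsequence converging in Gromov--Hausdorff sense to a compact length space $(X,d_\infty)$. The uniform diameter bound comes from $Ric(\dr_\delta)\ge\mu\dr_\delta$ together with the volume normalization: when $\mu>0$ one invokes Myers' theorem directly, and in general one uses the non-collapsing estimate and a Bishop--Gromov argument as in \cite{Ti6}. One then identifies $(X,d_\infty)$ with the metric completion of $(M\setminus D,\dr)$: away from $D$ the $C^\infty_{\mathrm{loc}}$ convergence gives convergence of the Riemannian distance functions on compact subsets, and near $D$ one uses the asymptotic model metric $\dr_{0,p}$ together with \eqref{eq:C2} to show that a tubular neighborhood of $D$ has $\dr_\delta$-diameter going to $0$ uniformly in $\delta$ as the tube shrinks — this prevents the limit from developing extra pieces over $D$ and pins down $X$ as $\overline{M\setminus D}$ with the conic metric. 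Uniqueness of the limit again upgrades subsequential to full convergence.

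The main obstacle is this last step: controlling the geometry of $\dr_\delta$ \emph{uniformly} in a neighborhood of the divisor $D$, where the metrics degenerate. The interior estimates \eqref{eq:high} are useless there, so one must argue with the comparison \eqref{eq:C2} alone, showing that the $\dr_\delta$-length of any curve through the tube $\{\prod_i\|S_i\|_i^2<\varepsilon\}$ is $O(\varepsilon^{\beta_{\min}/2})$ uniformly in $\delta$; this is where the condition $0<\beta_i<1$ and the precise form of the upper bound in \eqref{eq:C2} (with the factor $(\delta+\|S_i\|_i^2)^{-(1-\beta_i)}$, which integrates to a finite metric radius) is essential. One also needs that no volume concentrates near $D$, i.e. $\int_{\{\prod_i\|S_i\|_i^2<\varepsilon\}}\dr_\delta^n\to 0$ uniformly as $\varepsilon\to 0$, which again follows from \eqref{eq:cma-app} and the $C^0$-bound on $\varphi_\delta$. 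The simple normal crossing structure means one must run this tube estimate componentwise and handle the intersection loci $D_i\cap D_j$, but the transversality in the local model $\dr_{0,p}$ makes this a routine (if notationally heavy) refinement of the case $k=1$, so I would only sketch it and refer to \cite{Ti6} for the details in the Kähler--Einstein case.
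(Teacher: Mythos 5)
The first half of your proposal (interior $C^\infty_{\mathrm{loc}}$ convergence on $M\setminus D$ to a limit identified with $\dr$ by uniqueness, plus diameter bound and Gromov precompactness) matches the paper's framework. The diameter bound via the $\dr_\delta$-length integral $\int_0^{r_0}(\delta+r^2)^{-(1-\beta)/2}\,dr\le Cr_0^\beta/\beta$ is also the paper's computation.

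However, the crucial step — identifying the Gromov--Hausdorff limit $(X,d_\infty)$ with $(M,d_\dr)$ — is where your argument has a genuine gap. You write that one shows ``a tubular neighborhood of $D$ has $\dr_\delta$-diameter going to $0$ uniformly in $\delta$ as the tube shrinks,'' and conclude that this ``prevents the limit from developing extra pieces over $D$.'' Two problems. First, the intrinsic diameter of a tube $T_\varepsilon$ around $D$ does \emph{not} go to $0$: the transverse width is $O(\varepsilon^\beta)$, but the tube contains a neighborhood of all of $D$, so its diameter stays $\ge\mathrm{diam}(D)>0$. Second, and more to the point, even small transverse width does not preclude \emph{shortcuts}: the danger is not extra pieces over $D$, but that the limit distance $d_\infty(\bar p,\bar q)$ between points of $M\setminus D$ could be \emph{strictly smaller} than $d_\dr(p,q)$, with the saving realized by $\dr_\delta$-geodesics that dip into the degenerating region near $D$ where \eqref{eq:high} gives no control. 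The natural map $i:(M,d_\dr)\to(X,d_\infty)$ is automatically $1$-Lipschitz from the pinching \eqref{eq:C2}, so the entire content is to rule out $d_\infty<d_\dr$, and diameter-of-tube reasoning does not touch this.

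The paper closes this gap with two ingredients you have not supplied. First, it proves $M\setminus D$ is \emph{geodesically convex} for $\dr$ by an explicit calculation in the model cone metric $\dr_{o,c}$, comparing the path through $o\in D$ of length $\approx 2\varepsilon^\beta/\beta$ against the path going around $o$ of length $\approx C\varepsilon+2\sin(\pi\beta/2)\,\varepsilon^\beta/\beta$, which is shorter when $\beta<1$. This shows the metric completion of $(M\setminus D,\dr)$ is $(M,d_\dr)$ itself. Second, and this is the heart of the argument, it establishes a Gromov-type volume estimate (Lemma~6.3): if a hypothetical shortcut forced every minimizing $\dr_\delta$-geodesic from $B_r(p_\delta)$ to $B_r(q_\delta)$ to cross $\partial T_\delta$, then Bishop--Gromov comparison along the geodesic cone gives $c(\mu)r^{2n}\le\mathrm{Vol}(B_r(q_\delta),\dr_\delta)\le C(L,\mu,n,r)\,\mathrm{Vol}(\partial T_\delta,\dr_\delta)$. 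Combined with Colding's volume convergence theorem (which makes $\mathrm{Vol}(\partial T_\delta,\dr_\delta)$ arbitrarily small as the tube shrinks), this yields a contradiction. Without this volume-comparison mechanism your identification of the limit space is unsubstantiated; you would need to either reproduce it or invoke an equivalent quantitative argument that geodesics between points far from $D$ cannot pass through a thin tube.
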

\begin{proof}
   In this proof, we first consider D as an irreducible divisor.
As we have high order estimates \eqref{eq:C2} \eqref{eq:high} outside the divisor D, it suffices to prove $\dr_{\delta}$ converges
to $\dr$ in the Gromov-Hausdorff topology. Note that for all $\dr_{\delta}$ we have $Ric(\dr_{\delta})\geq\mu, Vol(M,\dr_{\delta})=V,$
to apply compactness theorem (e.g. see Chap.10 in \cite{Pe}), we only need to bound the diameter for all $\dr_{\delta}.$
In case that $\mu>0$ we can get it directly by Meyer's theorem. However, as we have the estimate \eqref{eq:C2}, it's easy to
control the length of arbitrary geodesics outside the divisor. And in the neighborhood of some irreducible divisor, say D, we make
use of local coordinates and set $r=|z_{1}|,$ where $\{z_{1}=0\}$ locally defines the divisor D. Now we know that $||S||$ here is
almost $r$ near the divisor and we consider the length of a short geodesic $\gamma$ transverse to D that:
$$L(\gamma,\dr_{\delta})\approx C\int_{0}^{r_{0}}\frac{dr}{(\delta+r^{2})^{\frac{1-\beta}{2}}}\leq C\int_{0}^{r_{0}}\frac{dr}
{r^{1-\beta}}\leq\frac{Cr_{0}^{\beta}}{\beta}.$$
   Now by compactness theorem, without loss of generality, $(M,\dr_{\delta})$ converge to a length space $(\bar{M},\bar{d})$ in Gromov-
Hausdorff topology. To prove the theorem we need to prove that $(\bar{M},\bar{d})$ coincides with $(M,\dr).$ As we have high order
estimate \eqref{eq:high} outside the divisor D, there exists an open set U in $\bar{M}$ which is equivalent to $M\setminus D$, and the
equivalence $i: M\setminus D\ra U$ induces an isometry between $(M\setminus D,\dr|_{M\setminus D}$ and $(U,\bar{d}.)$ Now we note that
$M\setminus D$ is geodesically convex with respect to $\dr,$ i.e. any two points $p,q\in M\setminus D$, there exists a minimal geodesic
$\gamma\subset M\setminus D$ joining them. Actually we only need to consider the case when $p,q$ are in the small neighborhood of $o\in D.$
In this case we know that the metric $\dr$ is almost the standard conic metric around a point $o\in D, $ which behaviors like
$$\dr_{o,c}=\sqrt{-1}\left(\frac{dz_{1}\wedge d\bar{z}_{1}}{|z_{1}|^{2(1-\beta)}}+\sum_{i=2}^{n}dz_{i}\wedge d\bar{z}_{i}\right).$$
Now we assume that $|z_{1}(p)|=|z_{1}(q)|=\epsilon, |z_{i}(p)|, |z_{i}(q)|\approx\epsilon$ where $\epsilon>0$ is small enough and $2\geq
i\geq n.$ First we choose the segment connecting p and q acrossing the point $o\in D.$ By the estimate above we know that
$$d(p,o)+d(o,q)\approx\frac{2\epsilon^{\beta}}{\beta}.$$ On the other hand we choose a segment $\gamma'$ whose projection on $z_{1}$
coordinate is almost a geodesic in the cone with angle $\beta$, by standard computation we know that
$$L(\gamma')\approx C\epsilon+2\sin\frac{\pi\beta}{2}\frac{\epsilon^{\beta}}{\beta}.$$ As $\epsilon$ is small and $\beta<1,$ we conclude
that the geodesic connecting p and q doesn't cross the point $o\in D.$ In general case we only need to choose $p',q'$ as the case above
to replace $p,q$ and connect $p,p'$ and $q,q'$ respectively then all the argument follows. \\
   As $M\setminus D$ is geodesically convex, by the $C^{2}$-estimate in \eqref{eq:C2} we can see that M is the metric completion of
$M\setminus D$, moreover, the equivalence i extends to a Lipschitz map from $(M,\dr)$ onto $(\bar{M},\bar{d})$ (we still denote this map
as i)and the Lipschitz constant is 1. What remains to do is to prove i is an isometry between $(M,\dr)$ and $(\bar{M},\bar{d})$. As
$(\bar{M},\bar{d})$ is a metric completion of $M\setminus D$, we only to need to prove that for $p,q\in M\setminus D$,
$$d_{\dr}(p,q)=\bar{d}(i(p),i(q)).$$ First we observe that $\bar{D}=i(D)$ is the Gromov-Hausdorff limit of D under the convergence of
$(M,\dr_{\delta})$ to $(\bar{M},\bar{d})$, whose Hausdorff measure is 0, by the $C^{2}$-estimate in \eqref{eq:C2}. Now we only need to
prove that for any $\bar{p},\bar{q}\in \bar{M}\setminus\bar{D}$ there exists a minimizing geodesic $\gamma\subset\bar{M}\setminus\bar{D}$
joining $\bar{p},\bar{q}$. If not, we will have $$\bar{d}(\bar{p},\bar{q})<d_{\dr}(p,q),$$ where $\bar{p}=i(p),\bar{q}=i(q).$ Then there
exists a small $r>0$ such that:\\
(1)$B_{r}(\bar{p},\bar{d})\bigcap\bar{D}=\emptyset,B_{r}(\bar{q},\bar{d})\bigcap\bar{D}=\emptyset,$ where $B_{r}(\cdot,\bar{d})$
 is a geodesic ball in $(\bar{M},\bar{d})$;\\
(2)$\bar{d}(\bar{x},\bar{y})<d_{\dr}(x,y),$ where $\bar{x}=i(x)\in B_{r}(\bar{p},\bar{d})$ and $\bar{y}=i(y)\in B_{r}(\bar{q},\bar{d}).$
From these two we know that any minimizing geodesic $\gamma$ connecting $\bar{x}$ and $\bar{y}$ intersects with $\bar{D}.$ As $r>0$ is
small, and i is an isometry outside the divisor D, we have $$B_{r}(\bar{p},\bar{d})=i(B_{r}(p,\dr)),\quad B_{r}(\bar{q},\bar{d})=i(B_{r}
(q,\dr)).$$ Choosing a small tubular neighborhood T of D in M whose closure is disjoint from both $B_{r}(p,\dr)$ and $B_{r}(q,\dr).$
When the radius of such tubular is small enough we can make $Vol(\partial T)$ arbitrary small. Now we can choose $p_{\delta},q_{\delta}
\in M$ and neighborhood $T_{\delta}$ of D with respect to $\dr_{\delta}$ such that as $\delta\ra 0$, $p_{\delta},q_{\delta},T_{\delta}$
converge to $\bar{p},\bar{q},i(T)$ in Gromov-Hausdorff topology. By volume convergence theorem of Colding, $$\lim_{\delta\ra 0+}
Vol(\partial T_{\delta},\dr_{\delta})=Vol(\partial T,\dr),$$ so $Vol(\partial T_{\delta},\dr_{\delta})$ can be also arbitrary small as
 $\delta\ra 0.$ Also by convergence, when $\delta$ is small enough, $B_{r}(p_{\delta},\dr_{\delta}),B_{r}(q_{\delta},\dr_{\delta})$
and $T_{\delta}$ are mutually disjoint. By (2), any minimizing geodesic $\gamma_{\delta}$ connecting any $w\in B_{r}(p_{\delta},
\dr_{\delta})$ and $z\in B_{r}(q_{\delta},\dr_{\delta})$ intersects with $T_{\delta},$ Now we need an estimate due to Gromov:
\begin{lemma}
$c(\mu)r^{2n}\leq Vol(B_{r}(q_{\delta},\dr_{\delta}),\dr_{\delta})\leq C(L,\mu,n,r) Vol(\partial T_{\delta},\dr_{\delta}),$ where
$L=\bar{d}(\bar{p},\bar{q}).$
\end{lemma}
\begin{proof}
The first inequality follows from the Ricci lower bound and Gromov's relative volume comparison theorem directly. For the second
inequality, by Chap.9 in \cite{Pe}, we set $\lambda(t,\theta)$ as the volume density function where t is the distance from $p_{\delta}$.
We also set $\lambda_{k}(t,\theta)$ as the standard volume density function of the space form with constant curvature $k=\frac{\mu}{n-1}.$
By the argument in \cite{Pe} we know that the map $$t\ra\frac{\lambda(t,\theta)}{\lambda_{k}(t,\theta)}$$ is nonincreasing in t. In our
case, we consider the geodesics from $p_{\delta}$ to $z\in B_{r}(q_{\delta},\dr_{\delta}).$ According to the construction, we have
$r<d(p_{\delta},z_{T})<d(p_{\delta},z), L-r<d(p_{\delta},z)<L+r,$ where $z_{T}$ is the intersection point of the geodesics from $p_{\delta}$
 to z and $\partial T_{\delta}$, and $L\approx d(p_{\delta)},q_{\delta})$. Along $\partial T_{\delta}$,
we have $$\frac{\lambda(z_{T})}{\lambda_{k}(z_{T})}\geq\frac{\lambda(z)}{\lambda_{k}(z)}.$$ Let $S\in S^{2n-1}$ and $C(S)$ denote the part
which all the geodesics from $p_{\delta}$ to $z\in B_{r}(q_{\delta},\dr_{\delta})$ lie in and corresponding geodesic cone, and set
$t(\theta)$ as the distance from $p_{\delta}$ to each point of $\partial T_{\delta}$, then we have
\begin{align*}
 Vol(\partial T_{\delta})&\geq\int_{\partial T_{\delta}\bigcap C(S)}\lambda(t,\theta)=\int_{S}t^{2n-1}(\theta)\lambda(t,\theta)d\theta\\
&\geq\int_{S}\lambda(L')\frac{\lambda_{k}(t(\theta))}{\lambda_{k}(L')}t^{2n-1}(\theta)d\theta\geq C\int_{S}\lambda(L')L'^{2n-1}d\theta,
\end{align*}
where $L-r<L'<L+r.$ Now we take the integral of this inequality, we will have that
$$Vol(\partial T_{\delta})\geq\frac{2C}{r}\int_{L-r}^{L+r}\int_{S}\lambda(L')L'^{2n-1}d\theta dt\geq C(L,\mu,n,r)Vol(B_{r}(q_{\delta},
\dr_{\delta}),\dr_{\delta}).$$ Then the lemma follows.
\end{proof}
   As we have known that $Vol(\partial T_{\delta},\dr_{\delta})$ can be also arbitrary small as$\delta$ tends to 0, the lemma above leads
to a contradiction. Then i can extend to a isometry from $(M,\dr)$ onto $(\bar{M},\bar{d}),$ and the theorem follows when D is irreducible.
In case that D has a simple normal crossing, we only need to see that near the crossing point, we can project the geodesic to all directions
and do similar analysis to these and the theorem still follows.
\end{proof}

\end{document}